\documentclass[11pt]{amsart} 
\usepackage{mathrsfs}
\usepackage{amsthm}

\usepackage{amsfonts}
\usepackage{latexsym,amsmath,amssymb}

\usepackage[normalem]{ulem} 
\usepackage{xcolor}
\usepackage[colorlinks=true,linkcolor=blue]{hyperref}
\usepackage{indentfirst}
\renewcommand{\epsilon}{\varepsilon}

\newtheorem{theorem}{Theorem}[section]

\newtheorem{lemma}[theorem]{Lemma}

\newtheorem{defn}[theorem]{Definition}

\newtheorem{rem}[theorem]{Remark}
\newcommand{\bth}{\begin{theorem}}
	\newcommand{\ble}{\begin{lemma}}
		\newcommand{\bcor}{\begin{corr}}
			\newcommand{\bdeff}{\begin{defn}}
				\newcommand{\bprop}{\begin{proposition}}
				\newcommand{\ele}{\end{lemma}}
			\newcommand{\ecor}{\end{corr}}
		\newcommand{\edeff}{\end{defn}}
	
	\newcommand{\eprop}{\end{proposition}}

\renewcommand{\Pi}{\varPi}

\renewcommand{\epsilon}{\varepsilon}

\newcommand{\R}{{\mathbb R}}

\numberwithin{equation}{section}

\newtheorem{lem}{Lemma}[section]

\renewcommand{\P}{\partial}
\newcommand{\rD}{\mathrm{D}}

\newtheorem{definition}[lem]{Definition}

\thanks{The  authors were supported by  NSFC Grant No.12661037 and  Guangxi Natural Science Foundation No.2026GXNSFDA00640012.}
\title
{The symmetric maximal  surface  equation}

\author{Rongli Huang}
\address{School of Mathematics and Statistics, Guangxi Normal University,
	Guilin, Guangxi 541004, People's Republic of China, E-mail:ronglihuangmath@gxnu.edu.cn}
\author{Peihe Wang}
\address{School of Mathematical Sciences, Qufu Normal University, Qufu, 273165,
Shandong Province, China, E-mail:peihewang@hotmail.com}
\author{Hengyu Zhou}
\address{Chongqing Key Laboratory of Analytic Mathematics and Applications, College of Mathematics and Statistics, Chongqing University, Daxuecheng South Rd, 55, 401331 Chongqing, P. R. China,  E-mail:zhouhyu@cqu.edu.cn}
\begin{document}

\begin{abstract}
We establish the existence of smooth solutions to the symmetric maximal surface equation with degenerate boundary conditions. Moreover, we prove that these solutions maximize the associated area functionals. This result serves as the Lorentzian analogue of minimal graphs in hyperbolic spaces together with their associated area minimizing problem.

\end{abstract}
	\maketitle
	%\section{}
	%\subsection{}
	\let\thefootnote\relax\footnote{
		2010 \textit{Mathematics Subject Classification}. Primary 53C44; Secondary 53A10.
		\textit{Keywords and phrases}. Symmetric maximal surface; Conformal Lorentzian cylinder; Classical strictly space-like solution;}

\section{Introduction}
Lorentzian geometry is the mathematical framework for general relativity and shares striking structural analogies with Riemannian geometry. Maximal and minimal surfaces, together with their governing equations, have been extensively studied. In this paper, we investigate one such parallel: the symmetric maximal surface equation and its associated hypersurfaces, which serve as the Lorentzian counterpart of the symmetric minimal surface equation proposed by Fouladgar and Simon \cite{FS20}.

The symmetric minimal surface equation
\[
-\mathrm{div}\left(\frac{\rD u}{\sqrt{1+|\rD u|^2}}\right)+\frac{m}{u\sqrt{1+|\rD u|^2}}=0,
\]
is central to Simon's construction \cite{Sim23} of stable minimal embedded hypersurfaces with arbitrary prescribed closed singular sets in high dimensions. Here $\rD u$ denotes the gradient of $u$ on $\Omega$, and $\mathrm{div}$ is the divergence operator on $\Omega$. This equation is the Euler--Lagrange equation of the functional
\[
\mathcal{F}_{\Omega}(u)=\int_{\Omega}u^m\sqrt{1+|\rD u|^2}\,\mathrm{dvol},
\]
for smooth domains $\Omega\subset\R^n$; its minimizers correspond to the area of rotationally symmetric hypersurfaces in $\R^{n+m+1}$, where $\mathrm{dvol}$ is the volume form on $\Omega$.\\
\indent In the Lorentzian setting, the analogous problem is to maximize the area-type functional
\begin{equation}\label{area:formula}
\int_{\Omega}u^m\sqrt{1-|\rD u|^2}\,\mathrm{dvol},
\end{equation}
for spacelike functions $u$ on $\Omega$. This functional corresponds to the area of a rotationally symmetric spacelike hypersurface in a certain Lorentzian manifold (see Subsection \ref{subsection:SME} for details). It gives rise to the following degenerate Dirichlet problem:
\begin{equation}\label{infinity:Dirichlet:problem}
\begin{cases}
\displaystyle
\mathrm{div}\left(\frac{\rD u}{\sqrt{1-|\rD u|^2}}\right)+\frac{m}{u\sqrt{1-|\rD u|^2}}=0, & \text{in } \Omega, \\[1.2ex]
u=0, & \text{on } \partial\Omega,
\end{cases}
\end{equation}
where $\Omega$ is a smooth bounded domain in an arbitrary Riemannian manifold $N$ with metric $\sigma$. Although if $m$ is the dimension of $\Omega$, the graph of $u$ is maximal with respect to the Lorentzian metric $r^2(\sigma-dr^2)$, the structure of \eqref{infinity:Dirichlet:problem} suggests a much closer resemblance to minimal graphs in hyperbolic spaces under the half-space model. Anderson \cite{Anderson} and Lin \cite{LFH} have studied the existence of minimal graphs in hyperbolic spaces and their associated area-minimizing properties.\\
\indent In the following, we obtain the first main result of this paper. 
\begin{theorem}\label{main:thm:A}
Fix $m>0$. Let $\Omega$ be a smooth bounded domain in a Riemannian manifold. Then the following hold:
\begin{enumerate}
\item There exists a unique function $u \in C^\infty(\Omega)\cap C(\bar{\Omega})$ solving \eqref{infinity:Dirichlet:problem} such that $u$ is positive and strictly spacelike, i.e. $|\rD u|<1$, in $\Omega$. Moreover, $|\rD u|=1$ on $\partial\Omega$.
\item The function $u$ uniquely maximizes the functional $\mathcal{A}_{\phi,\Omega}(u)$ with $\phi(r)=r$ (see \eqref{area:maximizing} below) among all Lipchitz strictly spacelike functions that are positive on $\Omega$ and vanish on $\partial\Omega$.
\end{enumerate}
\end{theorem}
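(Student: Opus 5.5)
We will prove existence by approximation: solve the non-degenerate Dirichlet problems with boundary value $\epsilon>0$, obtain uniform estimates, and let $\epsilon\to0$. Uniqueness and the variational characterization will then come from comparison and concavity.

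\textbf{Step 1: approximating problems.} For $\epsilon>0$ we consider
\[
\mathrm{div}\Big(\frac{\rD u}{\sqrt{1-|\rD u|^2}}\Big)+\frac{m}{u\sqrt{1-|\rD u|^2}}=0\ \text{in }\Omega,\qquad u=\epsilon\ \text{on }\partial\Omega.
\]
The equation is quasilinear and elliptic for spacelike $u$, and the zero-order term is positive and decreasing in $u$, which is the favorable sign for the comparison principle. We will solve it by the continuity method (or Leray--Schauder), using the following a priori estimates.

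\emph{$C^0$ bounds.} The constant $\epsilon$ is a subsolution, so $u\ge\epsilon$. For an upper barrier we note that any spacelike function with zero boundary values satisfies $u\le d(x,\partial\Omega)\le \mathrm{diam}\,\Omega$; this gives $u\le \epsilon+\mathrm{diam}\,\Omega$.

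\emph{Boundary gradient.} Near $\partial\Omega$ we will use barriers of the form $\epsilon+\psi(d)$ with $d$ the distance to $\partial\Omega$ and $\psi'<1$. The lower barrier is $\epsilon$ itself. The upper barrier is $\epsilon+d$ (or a slight modification), which lies at the lightlike threshold, so strict spacelikeness at the boundary is not automatic.

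\emph{Strict interior spacelikeness.} This is the key estimate: a bound $|\rD u|\le 1-\theta(K)$ on compact sets $K\Subset\Omega$, uniform in $\epsilon$. We will follow Bartnik--Simon. Set $\nu=(1-|\rD u|^2)^{-1/2}$ and compute a Jacobi-type inequality for $\nu$. Then apply the maximum principle to $\eta\nu$ with a cutoff $\eta=(u-\delta)_+^{k}$, or to a time-function cutoff built from $u$ and $d$. The term $m/u$ has a good sign, and $u$ is bounded below on $K$ uniformly in $\epsilon$ by comparison with the limit subsolutions. We expect this step to be the main obstacle. On a general Riemannian $N$, the curvature of $\sigma$ enters the Bochner formula and must be absorbed using the positive lower bound on $u$ in $K$.

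Once spacelikeness is strict, the equation is uniformly elliptic. De Giorgi--Nash and Schauder theory then give $C^{k}_{loc}$ bounds, and, together with global $C^{1,\alpha}$ for fixed $\epsilon$, existence of $u_\epsilon$.

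\textbf{Step 2: limit and boundary behavior.} By comparison $u_\epsilon$ is monotone in $\epsilon$ (larger data give larger solutions, since the zero-order term has the right monotonicity). Hence $u=\lim u_\epsilon$ exists, is smooth in $\Omega$ by the local estimates, and is $1$-Lipschitz.

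For positivity we compare with a small subsolution. A candidate is a rescaled radial solution on a small geodesic ball: solve the ODE for symmetric solutions on a ball $B_r$ and place it inside $\Omega$, so $u\ge$ that profile. The pointwise limit $u_\epsilon\to u$ together with the uniform Lipschitz bound gives $u\in C(\bar\Omega)$, and $u\le d$ gives $u=0$ on $\partial\Omega$.

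To show $|\rD u|\to1$ at the boundary, we argue by contradiction. If $|\rD u|\le 1-\theta$ near a boundary point, then $u\sim c\,d$ with $c<1$. The term $m/(u\sqrt{1-|\rD u|^2})\sim m/(c d)$ blows up, whereas the divergence term stays bounded in the rescaled limit. Blowing up at the boundary point yields a half-space solution $u=cx_n$, which must satisfy $m/(c x_n\sqrt{1-c^2})=0$; this is impossible. Hence the boundary gradient is lightlike.

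\textbf{Step 3: uniqueness and maximization.} Let $\mathcal A(v)=\int_\Omega v^m\sqrt{1-|\rD v|^2}$. We substitute $w=v^{m+1}/(m+1)$ (or an analogous $\phi$-adapted change) and use the identity between the Euler--Lagrange equation and a calibration. Define the vector field
\[
X=\frac{u^m\rD u}{\sqrt{1-|\rD u|^2}}.
\]
By the equation, $\mathrm{div}X$ is expressed through $u^{m-1}/\sqrt{1-|\rD u|^2}$, which makes the following pointwise Cauchy--Schwarz-type inequality usable:
\[
v^m\sqrt{1-|\rD v|^2}\le u^m\sqrt{1-|\rD u|^2}+\langle \text{appropriate combination}\rangle.
\]
This inequality comes from concavity of $(t,p)\mapsto t^m\sqrt{1-|p|^2}$ in suitable variables, namely joint concavity of $(s,q)\mapsto\sqrt{s^{2}-|q|^2}$ with $s=v^{m}$-type substitutions; this concavity is reverse Cauchy--Schwarz for timelike vectors.

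Integrating over $\Omega_\delta=\{d>\delta\}$ and letting $\delta\to0$ gives $\mathcal A(v)\le\mathcal A(u)$. The boundary terms vanish because $v,u\to0$ and $|X|$ is controlled near $\partial\Omega$; we need the growth of $X$ there, which follows from $u\le d$ together with the rate of the gradient blow-up of $\nu$. Equality forces equality in reverse Cauchy--Schwarz, so $v=u$. The same concavity argument also yields uniqueness of the solution in part (1).
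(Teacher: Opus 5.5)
\textbf{Main gap: the boundary-gradient claim in Step 2.} Your Step 1 matches the paper in spirit. The paper truncates $1/u$ and cites Gerhardt's Theorem 5.1 for the approximating problems, which disposes of the boundary-gradient worry you flag. Its interior estimate is Bartnik's, with cutoff $n\log(u-c_0)$, essentially your $(u-\delta)_+^k$. The genuine gap is Step 2's proof that $|\mathrm{D}u|=1$ on $\partial\Omega$.

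The equation is invariant under $u\mapsto\lambda^{-1}u(y_0+\lambda x)$ (exactly in flat space, asymptotically in general). So after rescaling, $m/(u\sqrt{1-|\mathrm{D}u|^2})$ does not dominate the divergence term; the equation says they are equal at every point. Under your hypothesis $|\mathrm{D}u|\le1-\theta$ near $y_0$, a blow-up yields only a positive solution of the same equation on a half-space, with Lipschitz constant at most $1-\theta$ and vanishing on the boundary. Nothing forces it to be linear $c\,x_n$: that would need $C^1$ regularity up to $\partial\Omega$, which is exactly what fails. Excluding such a solution needs a lower barrier.

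The missing idea is the paper's. Place the radial subsolution you already use for positivity in a ball tangent to $\partial\Omega$ at $y_0$. Comparison gives $u\ge h$ there, and $h$ vanishes at $y_0$ with slope tending to $1$. Together with $u\le d$, this forces the normal difference quotients of $u$ at $y_0$ to tend to $1$. Note also that your contradiction hypothesis (``$|\mathrm{D}u|\le1-\theta$ on a whole neighbourhood'') is too strong, so even a repaired version would only give $\limsup|\mathrm{D}u|=1$.

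For the barrier you also skipped two things. First, you need existence of an ODE profile reaching $0$ at finite radius with $u'\to-1$ (the paper's Section 3). Second, on curved $N$ you need slack to absorb $\Delta r-\frac{n-1}{r}=O(r)$; the paper solves the ODE with $\alpha<m$ and shrinks the ball. The same barrier gives $u\ge c\,d$ near $\partial\Omega$, which you need below.

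\textbf{Step 3: a sound, genuinely different route, but the boundary flux is unproved.} With $w=v^{m+1}/(m+1)$,
\[
v^m\sqrt{1-|\mathrm{D}v|^2}=\sqrt{((m+1)w)^{2m/(m+1)}-|\mathrm{D}w|^2}.
\]
This is strictly jointly concave in $(w,\mathrm{D}w)$, because $w\mapsto((m+1)w)^{m/(m+1)}$ is strictly concave and $(s,q)\mapsto\sqrt{s^2-|q|^2}$ is concave and increasing in $s$. The tangent inequality at $u$, combined with the equation, gives, with $\nu_u:=(1-|\mathrm{D}u|^2)^{-1/2}$,
\[
v^m\sqrt{1-|\mathrm{D}v|^2}\le u^m\sqrt{1-|\mathrm{D}u|^2}-\mathrm{div}\Big(\frac{v^{m+1}-u^{m+1}}{m+1}\,\nu_u\mathrm{D}u\Big).
\]
So the correct field is this one, not $u^m\nu_u\mathrm{D}u$.

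The paper instead calibrates with the foliation by maximal graphs $u_t$ in the conformal metric $\tilde\phi^2(\sigma-dr^2)$. It applies this to $u_{c_i}$ versus $v+c_i$ and passes to the limit, and for uniqueness it asserts that an equal-area competitor solves the Euler--Lagrange equation. Your route needs no foliation, and it gives uniqueness of the maximizer directly from strict concavity, without any regularity of the competitor.

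The price is the flux at the degenerate boundary, and there your appeal to ``the rate of gradient blow-up'' is unsupported: Step 2 only gives $\nu_u\to\infty$. Set $F(s)=\int_{\{u=s\}}\nu_u|\mathrm{D}u|=\int_{\{u>s\}}\frac{m}{u}\nu_u$. You need $s^{m+1}F(s)\to0$ along some sequence $s\to0$. This can be obtained without any pointwise rate:
\begin{itemize}
\item Integrate $\mathrm{div}(u^{m+1}\nu_u\mathrm{D}u)=u^m\nu_u|\mathrm{D}u|^2-mu^m\nu_u^{-1}$ over $\{u>s\}$. The boundary term is $\le 0$, which gives $\int_\Omega u^m\nu_u\le(m+1)\mathcal{A}(u)$.
\item Hence $\int_0^{s_0}s^mF(s)\,ds\le\frac{m}{m+1}\int_\Omega u^m\nu_u<\infty$, so $\liminf_{s\to0}s^{m+1}F(s)=0$.
\item Combined with $v\le d\le Cu$ (from the barrier), this kills the boundary term along that sequence.
\end{itemize}
Without this argument, and without $u\ge c\,d$, Step 3 is incomplete.
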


The Dirichlet problem for the maximal surface equation with prescribed mean curvature,
\[
\mathrm{div}\!\left(\frac{\rD u}{\sqrt{1-|\rD u|^2}}\right)+F(x,u,\rD u)=0,
\]
has been extensively studied for smooth $F$. When $F=F(x,u)$, the problem was treated by Bartnik and Simon \cite{BS82}; later, the existence was established by Bartnik \cite{Bartnik84,Bartnik89} under some structural condition, and by Gerhardt \cite{gerhardt1983h} under a boundary acausal assumption. The main difficulty in \eqref{infinity:Dirichlet:problem} lies in the singular term $\frac{1}{u}$ as $u\to 0$---which also appears in the hyperbolic minimal graph setting \cite{LFH,Anderson}. Part (1) of Theorem \ref{main:thm:A} follows from Lemma \ref{lm:co:bound}, which provides a positive pointwise lower bound for approximating sequences, combined with an interior gradient estimate inspired by Bartnik \cite[Theorem 3.1]{Bartnik89}. 
Wang and Xiao obtained the existence of smooth, entire, strictly convex, spacelike, constant  curvature hypersurfaces with prescribed light-like directions in Minkowski space \cite{WX1} and the existence of convex entire spacelike hypersurfaces with constant  curvature in Minkowski space with prescribed boundary data at infinity \cite{WX2}.
For recent progress on other nonlinear equations arising in Lorentzian geometry, see \cite{BS25,BP26,Mani26,ma2026liouville,RWX1,RWX2} and references therein.\\
\indent Part (2) of Theorem \ref{main:thm:A} is derived from Lorentzian analogues of the area-minimizing problem in hyperbolic spaces \cite{LFH,Anderson}, Euclidean cones \cite{Rado,Tausch}, and conformal cones studied by Gao and the third author in \cite{GZ20,GZ21} under certain mean convex assumptions. More precisely, given a smooth bounded domain $\Omega$ and a strictly spacelike smooth function $\varphi$ on $\bar{\Omega}$, the area-type maximizing problem is to find   
\begin{equation}\label{area:maximizing}
\sup \left\{\mathcal{A}_{\phi,\Omega}(u) : u\in\mathcal{F}\right\}, \quad
\mathcal{A}_{\phi,\Omega}(u):=\int_{\bar{\Omega}}\phi(u)^m\sqrt{1-|\rD u|^2}\,\mathrm{dvol},
\end{equation}
where $\phi$ is a smooth positive function on $\mathbb{R}$, and $\mathcal{F}_{\varphi}$ denotes the set
\[
\mathcal{F}_{\varphi}=\{ u \in \mathrm{Lip}(\bar{\Omega}) : \mathrm{Lip}_{\Omega}(u)< 1,\; u = \varphi \text{ on } \partial\Omega,\; u>0\}.
\]
\indent In fact, for any spacelike function $u$, $\mathcal{A}_{\phi,\Omega}(u)$ is the Riemannian volume of its graph in the Lorentzian warped product $N\times \mathbb{R}$ endowed with the metric $\phi^2(r)(\sigma-dr^2)$ if $m$ is the dimension of $N$. The corresponding Dirichlet problem, which was obtained by Gerhardt \cite[Theorem 5.1]{gerhardt1983h} in much more general forms, is given by
\begin{equation} \label{general:Dirichlet:problem}
\begin{cases}
\operatorname{div}\!\left(\dfrac{\rD u}{\sqrt{1-|\rD u|^2}}\right)+\dfrac{\phi'(u)}{\phi(u)}\dfrac{m}{\sqrt{1-|\rD u|^2}}=0, & \text{in } \Omega, \\[1.2ex]
u=\varphi, & \text{on } \partial\Omega.
\end{cases}
\end{equation}
However, uniqueness is not guaranteed without additional assumptions. A monotonicity assumption on $\log\phi$ relates the area-type maximizing problem to the solution of \eqref{general:Dirichlet:problem} as follows.

\begin{theorem}\label{main:thm:B}
Let $\Omega$ be an smooth bounded domain in an $n$ dimensional Riemannian manifold, and let $\varphi$ be a smooth function satisfying $|\mathrm{D}\varphi|\le \kappa_0$ for some constant $\kappa_0<1$. Suppose that $(\log\phi)''\le 0$ on $\mathbb{R}$. Then there exists a unique spacelike function $u\in C^\infty(\Omega)\cap C(\overline{\Omega})$ solving the area maximizing problem in \eqref{area:maximizing}. Moreover, this function solves the Dirichlet problem in \eqref{general:Dirichlet:problem}.
\end{theorem}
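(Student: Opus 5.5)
Existence comes from Gerhardt \cite[Theorem 5.1]{gerhardt1983h}: since $|\rD\varphi|\le\kappa_0<1$ on $\bar\Omega$, the boundary data are acausal, so \eqref{general:Dirichlet:problem} has a strictly spacelike solution $u\in C^\infty(\Omega)\cap C(\bar\Omega)$. If needed, one adds $C^0$ bounds by comparison with translates of $\varphi$ or barriers built from the distance function, together with interior gradient bounds in the spirit of Bartnik \cite[Theorem 3.1]{Bartnik89}. The positivity constraint $u>0$ in $\mathcal F_\varphi$ must be checked for this solution, presumably from positivity of $\varphi$ and a maximum-principle argument. The remaining task is to show that this $u$ maximizes $\mathcal A_{\phi,\Omega}$ over $\mathcal F_\varphi$ and is the unique maximizer.

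The key observation is that the concavity hypothesis makes the integrand jointly concave. Write $\phi^m=e^{m\psi}$ with $\psi=\log\phi$ concave. The plan is a calibration-type argument using concavity along the linear path $u_t=(1-t)u+tv$ for any competitor $v\in\mathcal F_\varphi$. The set $\mathcal F_\varphi$ is convex, since $\mathrm{Lip}(u_t)<1$ and $u_t=\varphi$ on $\partial\Omega$.

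Set $f(t)=\mathcal A_{\phi,\Omega}(u_t)$. We want to show that $f$ is concave and that $f'(0)=0$. For the derivative, differentiating under the integral and integrating by parts with $\eta=v-u$, which vanishes on $\partial\Omega$, gives
\[
f'(0)=\int_\Omega \phi(u)^m\Big(\tfrac{m\phi'(u)}{\phi(u)}\tfrac{\eta}{\sqrt{1-|\rD u|^2}}\cdot(1-|\rD u|^2)\cdots\Big).
\]
More cleanly, the Euler--Lagrange equation of $\mathcal A_{\phi,\Omega}$ is exactly \eqref{general:Dirichlet:problem} after dividing by $\phi^m$, so $f'(0)=0$. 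The integration by parts near $\partial\Omega$ needs care because $u$ is only $C(\bar\Omega)$. We handle it by exhausting $\Omega$ with smooth subdomains $\Omega_\epsilon$ and using that $\eta\to0$ at the boundary while the flux $\phi^m\rD u/\sqrt{1-|\rD u|^2}$ stays bounded. That bound holds if $u$ is uniformly strictly spacelike up to $\partial\Omega$, which Gerhardt's result gives for acausal data.

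Concavity of $f$ is the main obstacle. The integrand is $G(z,p)=e^{m\psi(z)}\sqrt{1-|p|^2}$, and a product of a log-concave function with a concave function is not concave in general. The first thing to try is to substitute the transformed variable $w=\Phi(u)$, where $\Phi'=\phi^{m}$ up to normalization, so that $G$ becomes $\sqrt{\phi(u)^{2m}-|\rD w|^2}$. This is concave in $(a,p)\mapsto\sqrt{a^2-|p|^2}$ for $a>0$, because it is the Minkowski norm on the future cone, and concavity of $\phi^m\circ\Phi^{-1}$ would then follow from the log-concavity of $\phi$. The linear path should then be taken in the $w$ variable, $w_t=(1-t)\Phi(u)+t\Phi(v)$, which preserves the boundary data and the spacelike condition. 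The identity $f'(0)=0$ still holds since $u$ is critical.

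If this substitution fails to give joint concavity, the fallback is a strict-concavity argument for $g(t)=\log$-type quantities pointwise. Either way, $f(1)\le f(0)$. Uniqueness follows from strict concavity of $\sqrt{a^2-|p|^2}$ in directions not parallel to $(a,p)$. Equality forces $\rD w_1$ to be proportional to $\rD w_0$ with matching $a$-ratio, hence $v=u$ from the boundary condition. This argument also shows that any maximizer equals the Dirichlet solution, which gives the ``moreover'' clause.
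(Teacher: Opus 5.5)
Your proposal is correct, and it proves the theorem by a different mechanism from the paper's. Your substitution does give joint concavity, so the fallback is not needed. With $\Phi'=\phi^m$ and $A=\phi^m\circ\Phi^{-1}$, one computes $A''(w)=m(\log\phi)''(z)\,\phi(z)^{-m}$ at $z=\Phi^{-1}(w)$. Hence $A$ is concave exactly when $(\log\phi)''\le 0$.

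The function $F(a,p)=\sqrt{a^2-|p|^2}$ is concave on $\{a>|p|\}$ and increasing in $a$. You should state the monotonicity explicitly, since the composition $F(A(w),p)$ needs it. With it, the integrand $\sqrt{A(w)^2-|\mathrm{D}w|^2}$ is jointly concave and the set $\{|\mathrm{D}w|<A(w)\}$ is convex. The Euler--Lagrange equation at $w=\Phi(u)$ is exactly \eqref{general:Dirichlet:problem}. Equality in the reverse triangle inequality forces $(A(w_1),\mathrm{D}w_1)$ and $(A(w_0),\mathrm{D}w_0)$ to be positively proportional, i.e.\ $\mathrm{D}v=\mathrm{D}u$ a.e. This gives both uniqueness and the ``moreover'' clause.

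The paper instead argues by calibration. It solves the Dirichlet problem for every boundary datum $\varphi+t$. It uses $(\log\phi)''\le 0$ through the maximum principle to make these solutions unique and ordered, so that their graphs foliate $\overline{\Omega}\times\mathbb{R}$. The unit normal field $X$ of this foliation is divergence free in the conformal metric $\tilde\phi^2(\sigma-dr^2)$. Stokes' theorem on the region between $\mathrm{gr}(v)$ and $\mathrm{gr}(u_0)$, together with $\langle X,N\rangle\le -1$, then gives the inequality.

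Your route needs only the single solution $u$. It requires no family $u_t$, no continuous dependence on $t$, and no smoothness of a foliation. It also needs no separate comparison argument for uniqueness of the Dirichlet solution, since strict concavity already shows that any boundary-regular critical point is the unique maximizer. The paper's route buys generality of mechanism: it does not depend on a hidden convex structure in this particular Lagrangian, and it works whenever a foliation by maximal hypersurfaces is available, in the spirit of the hyperbolic and conformal-cone calibrations it cites.

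Both arguments rely on boundary regularity of $u$ that the paper leaves implicit. You need the flux $\mathrm{D}u/\sqrt{1-|\mathrm{D}u|^2}$ to be bounded near $\partial\Omega$. The paper needs the foliation to be smooth up to $\partial\Omega\times\mathbb{R}$ for Stokes. What makes this available is the strictly spacelike extension $|\mathrm{D}\varphi|\le\kappa_0<1$, not acausality alone.

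Finally, drop the positivity discussion. The class actually used in Section 5 imposes no constraint $u>0$. Positivity of the Dirichlet solution also cannot be derived from $\varphi>0$ in general: for $\phi(r)=e^{-r}$ the solution satisfies $\mathrm{div}(\mathrm{D}u/\sqrt{1-|\mathrm{D}u|^2})>0$ and can become negative on large domains.
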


In general Lorentzian manifolds, as established in \cite{Bartnik89}, the solution to the area maximizing problem among spacelike functions is smooth away from a one-dimensional singular set. Hence Theorem \ref{main:thm:B} provides a new nontrivial example in which the area-type maximizing surface is smooth. For related results, see \cite{Ecker86, Hong20, HY21, CPR21} for more details.  We remark that the positive constant $m$ appearing in \eqref{infinity:Dirichlet:problem} and \eqref{general:Dirichlet:problem} is arbitrary positive and is independent of the dimension of $N$.

This paper is organized as follows. In Section 2, we recall some preliminary facts about the symmetric maximal surface equation. Section 3 presents the existence and uniqueness of the radial solution by means of Banach's fixed point theorem, along with an asymptotic estimate. In Section 4, we derive an interior gradient estimate of \eqref{infinity:Dirichlet:problem} via the maximum principle and an auxiliary function; this estimate then yields part (1) of Theorem 1.1. In Section 5, we discuss area-type maximizing problems in our setting. In Subsection 5.1, we establish Theorem \ref{main:thm:B} (see Theorem \ref{main:thm:BA}). In Subsection 5.2, we obtain part (2) of Theorem \ref{main:thm:A} via the proof of Theorem \ref{main:thm:A2}.

\section{Preliminaries}

In this section, we collect some geometric facts about Lorentzian warped products of the following form.

\begin{definition}
Let $N$ be a Riemannian manifold with dimension $n$ and a metric $\sigma$, $\Omega\subset N$ be a smooth bounded domain, and $\phi$ be a positive smooth function on $\mathbb{R}$. A Lorentzian warped product is the product $\Omega\times \mathbb{R}$ endowed with the Lorentzian metric $\phi^2(r)(\sigma-dr^2)$, denoted by $\mathcal{L}_\phi$. The function $\phi$ is called the conformal factor. In particular, when $\phi\equiv 1$, it reduces to the standard Lorentzian cylinder and is denoted by $\mathcal{L}$.
\end{definition}

This definition generalizes the standard Lorentzian cylinder by introducing a conformal factor. Most of the facts presented here are analogous to those in \cite[Section 2]{GZ21} in the Riemannian setting.
\subsection{Lorentzian manifolds}

\begin{definition}
Let $M$ be a Lorentzian manifold and $\Sigma\subset M$ be a smooth hypersurface with unit normal vector $\vec{v}$. We say that $\Sigma$ is spacelike if $\langle \vec{v},\vec{v}\rangle=-1$.
\end{definition}
This implies that the induced metric on $\Sigma$ is non-degenerate and Riemannian. The mean curvature of $\Sigma$ with respect to $\vec{v}$ is
\[
\mathrm{H}=\mathrm{div}(\vec{v})=\langle e_i,\bar{\nabla}_{e_i}\vec{v}\rangle,
\]
where $\{e_1,\dots,e_n\}$ is any local orthonormal frame on $\Sigma$, and $\bar{\nabla}$ denotes the Levi-Civita connection of $M$.

\begin{definition}
A hypersurface $\Sigma$ is \emph{maximal} in $M$ if and only if its mean curvature vanishes identically on $\Sigma$.
\end{definition}

\subsection{Lorentzian cylinders}

We first collect some basic facts about Lorentzian cylinders with the metric $\sigma-dr^2$.

Let $u$ be a $C^2$ function on $\Omega$. If $|\mathrm{D}u|<1$, we say that $u$ is strictly spacelike. Its graph $\Sigma=\{(x,u(x)):x\in\Omega\}$ is then spacelike in the Lorentzian cylinder $\mathcal{L}$. The upward timelike normal vector of $\Sigma$ in $\mathcal{L}$ is given by
\begin{equation}
\vec{n}=\frac{\mathrm{D}u+\partial_r}{\sqrt{1-|\mathrm{D}u|^2}},\qquad \langle \vec{n},\vec{n}\rangle=-1.
\end{equation}

Let $\{x_1,\dots,x_n\}$ be local coordinates on $N$. We set $\langle \partial_{x_i},\partial_{x_j}\rangle=\sigma_{ij}$, and define the frame
\[
X_i=\partial_{x_i}+u_i\partial_r,\qquad i=1,\dots,n,
\]
on $\Sigma$. Here $\P_r$ is the vector field along $\R$ satisfying $\langle \P_r,\P_r\rangle =-1$. In local coordinates on $N$, the induced metric of $\Sigma$ satisfies
\[
g_{ij}=\langle X_i,X_j\rangle=\sigma_{ij}-u_i u_j,\qquad
(g^{ij})=(g_{ij})^{-1}=\sigma^{ij}+\frac{u^i u^j}{1-|\mathrm{D}u|^2},
\]
where $(\sigma^{ij})=(\sigma_{ij})^{-1}$. We write $u^i=\sigma^{ik}u_k$, and the gradient of $u$ is given by $\mathrm{D}u=u^i\partial_{x_i}$. The mean curvature of $\Sigma$ in $\mathcal{L}$ is
\begin{equation}\label{ML1}
\mathrm{H}=g^{ij}\langle \bar{\nabla}_{X_i}\vec{n},X_j\rangle
=\operatorname{div}\left(\frac{\mathrm{D}u}{\sqrt{1-|\mathrm{D}u|^2}}\right),
\end{equation}
where $\bar{\nabla}$ is the Levi-Civita connection of $\mathcal{L}$. We define an important quantity $v$ by 
\[
v:=\frac{1}{\sqrt{1-|\mathrm{D}u|^2}}=-\langle \vec{n},\partial_r\rangle.
\]
The Riemann curvature tensor $\bar{R}$ is defined by
\[
\bar{R}(X,Y)Z=\bar{\nabla}_X\bar{\nabla}_Y Z-\bar{\nabla}_Y\bar{\nabla}_X Z-\bar{\nabla}_{[X,Y]}Z,
\]
and Codazzi's equation for $\Sigma$ is
\begin{equation}\label{eq:ADE}
\bar{R}(X,Y,Z,\vec{n})=(\bar{\nabla}_X A)(Y,Z)-(\bar{\nabla}_Y A)(X,Z),
\end{equation}
where $A$ is the second fundamental form of $\Sigma$.
The Laplacian of $v$ on the spacelike hypersurface $\Sigma$ in $\mathcal{L}$ is given as follows.

\begin{lemma}\label{lm:laplacian}
Let $\Delta$ and $\nabla$ be the Laplacian and covariant derivative of $\Sigma$ in $\mathcal{L}$, respectively, and let $\mathrm{H}$ be the mean curvature of $\Sigma$ given in \eqref{ML1}. Then
\[
\Delta v=|A|^2 v-\langle \nabla \mathrm{H},\partial_r\rangle+v^3\,\mathrm{Ric}(\mathrm{D}u,\mathrm{D}u),
\]
where $\mathrm{Ric}$ denotes the Ricci curvature of the Riemannian manifold $N$, and $\mathrm{D}u$ is the gradient of $u$.
\end{lemma}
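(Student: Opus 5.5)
The plan is to compute $\Delta v$ by differentiating $v=-\langle \vec n,\partial_r\rangle$ twice along $\Sigma$, using that $\partial_r$ is parallel in $\mathcal{L}$. Since $\mathcal{L}=N\times\R$ carries the product metric $\sigma-dr^2$, we have $\bar\nabla\partial_r=0$. We also have $\bar R(\cdot,\cdot,\cdot,\partial_r)=0$, and $\bar R$ restricted to horizontal vectors agrees with the curvature of $N$.

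Work at a point $p\in\Sigma$ with a local orthonormal frame $\{e_i\}$ normal at $p$, i.e.\ $\nabla_{e_i}e_j(p)=0$. Write $h_{ij}=\langle \bar\nabla_{e_i}\vec n,e_j\rangle$, so that $\bar\nabla_{e_i}\vec n=h_{ij}e_j$; this holds because $\langle\vec n,\vec n\rangle=-1$ forces $\bar\nabla\vec n$ to be tangent. Then
\[
e_i(v)=-\langle\bar\nabla_{e_i}\vec n,\partial_r\rangle=-h_{ij}\langle e_j,\partial_r\rangle .
\]
Differentiating once more gives
\[
e_ie_i(v)=-(\nabla_{e_i}h)_{ij}\langle e_j,\partial_r\rangle-h_{ij}\langle \bar\nabla_{e_i}e_j,\partial_r\rangle .
\]
At $p$, the Gauss formula for a spacelike hypersurface with timelike normal gives $\bar\nabla_{e_i}e_j=h_{ij}\vec n$, up to the sign convention forced by $\langle\vec n,\vec n\rangle=-1$. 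Hence the second term becomes $-h_{ij}h_{ij}\langle\vec n,\partial_r\rangle=|A|^2v$. The sign has to be tracked carefully so that it comes out as $+|A|^2v$. I would check this through $\langle \bar\nabla_{e_i}e_j,\vec n\rangle=-\langle e_j,\bar\nabla_{e_i}\vec n\rangle=-h_{ij}$ together with $\bar\nabla_{e_i}e_j=-\langle\bar\nabla_{e_i}e_j,\vec n\rangle\vec n$.

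For the first term, apply Codazzi \eqref{eq:ADE} to write
\[
(\nabla_{e_i}h)_{ij}=(\nabla_{e_j}h)_{ii}\pm\bar R(e_i,e_j,e_i,\vec n)=e_j(\mathrm H)\pm\bar R(e_i,e_j,e_i,\vec n).
\]
Summing the $\mathrm H$-part against $-\langle e_j,\partial_r\rangle$ yields $-\langle\nabla \mathrm H,\partial_r\rangle$. The curvature part is $\mp\sum_{i,j}\bar R(e_i,e_j,e_i,\vec n)\langle e_j,\partial_r\rangle=\mp\sum_i\bar R(e_i,T,e_i,\vec n)$, where $T=\partial_r^{\top}$ is the tangential projection.

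The main obstacle is evaluating this curvature term and showing it equals $v^3\mathrm{Ric}(\mathrm Du,\mathrm Du)$. I will decompose $\partial_r=T-\langle\partial_r,\vec n\rangle\vec n$, adapted to the sign convention, so that $T=\partial_r-v\vec n$. Since $\bar R$ kills $\partial_r$ in every slot, each $\vec n$ may be replaced by its horizontal part $\vec n^{H}=v\,\mathrm Du$, and $T$ by its horizontal part $-v\vec n^{H}=-v^2\mathrm Du$. Moreover, $\sum_i \bar R(e_i,X,e_i,Y)$ over a tangent orthonormal frame equals the full ambient trace plus the $\vec n$-contribution; that contribution vanishes by antisymmetry because both $X$ and $Y$ are multiples of $\mathrm Du$ after projection. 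The full ambient Ricci of $\mathcal{L}$ on horizontal vectors is $\mathrm{Ric}_N$. Collecting factors gives $v\cdot v^2\,\mathrm{Ric}(\mathrm Du,\mathrm Du)=v^3\mathrm{Ric}(\mathrm Du,\mathrm Du)$, with a sign that I will fix by the paper's convention for $\bar R(X,Y,Z,W)=\langle\bar R(X,Y)Z,W\rangle$. As a check, the result should agree with the Euclidean-case Jacobi-type identity $\Delta v=|A|^2v+\dots$.
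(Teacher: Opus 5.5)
Your proposal is correct and follows the paper's proof essentially step for step. You differentiate $v=-\langle\vec n,\partial_r\rangle$ twice in a normal frame and use $\bar\nabla_{e_i}e_j=h_{ij}\vec n$ at $p$ to get $|A|^2v$; Codazzi then trades $(\nabla_{e_i}h)_{ij}$ for $-\langle\nabla\mathrm H,\partial_r\rangle$ plus a curvature trace, and $\partial_r^{\top}=\partial_r-v\vec n$ together with $\bar\nabla\partial_r=0$ reduces that trace to $v\,\overline{\mathrm{Ric}}(\vec n,\vec n)=v^3\mathrm{Ric}(\mathrm Du,\mathrm Du)$.

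When you fix your $\pm$, derive Codazzi directly rather than quoting \eqref{eq:ADE} with your stated convention. With $\bar R(X,Y,Z,W)=\langle\bar R(X,Y)Z,W\rangle$ and $\langle\vec n,\vec n\rangle=-1$, one gets $(\nabla_{e_i}h)_{ij}=(\nabla_{e_j}h)_{ii}-\bar R(e_i,e_j,e_i,\vec n)$. So the curvature term is $+\sum_i\bar R(e_i,T,e_i,\vec n)=-\overline{\mathrm{Ric}}(T,\vec n)=v^3\mathrm{Ric}(\mathrm Du,\mathrm Du)$. By contrast, \eqref{eq:ADE} as printed is consistent with the paper's computation only under $\bar R(X,Y,Z,W)=\langle\bar R(X,Y)W,Z\rangle$, and using it verbatim with your convention would flip the final sign.
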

\begin{rem} The above result holds for any smooth hypersurfaces in Lorentzian cylinder. 
\end{rem}
\begin{proof}
Fix $p\in\Sigma$. Choose an orthonormal frame $\{e_1,\dots,e_n\}$ of $\Sigma$ at $p$ such that $\langle e_i,e_j\rangle=\delta_{ij}$ and $\nabla_{e_i}e_j(p)=0$. Recall that $h_{ij}=\langle \bar{\nabla}_{e_i}\vec{n},e_j\rangle$, where $\bar{\nabla}$ denotes the Levi-Civita connection of $\mathcal{L}$. Consequently, $\bar{\nabla}_{e_i}e_j=h_{ij}\vec{n}+\nabla_{e_i}e_j$.

The Laplacian of $v$ at $p$ is computed as follows:
\[
\begin{aligned}
\Delta v(p)&=-\nabla_{e_i}\langle \bar{\nabla}_{e_i}\vec{n},\partial_r\rangle(p)-\nabla_{\nabla_{e_i}e_i}v(p),\\
&=-\nabla_{e_i}\langle h_{ik}e_k,\partial_r\rangle(p),\\
&=|A|^2 v-h_{ik,i}\langle e_k,\partial_r\rangle(p),
\end{aligned}
\]
where $|A|=(\sum_{i,k}h_{ik}^2)^{1/2}$ denotes the norm of the second fundamental form. By \eqref{eq:ADE}, we have
\[
h_{ik,i}-h_{ii,k}=\bar{R}(e_i,e_k,e_i,\vec{n}).
\]
Combining the above identities yields
\[
\Delta v(p)=|A|^2 v-\langle \nabla \mathrm{H},\partial_r\rangle-\bar{R}(e_i,e_k\langle e_k,\partial_r\rangle,e_i,\vec{n}),
\]
where $\mathrm{H}=\sum_i h_{ii}$ is the mean curvature of $\Sigma$ given in \eqref{ML1}. The conclusion follows from $\langle e_k,\partial_r\rangle e_k=\partial_r-v\vec{n}$ and
\[
-\bar{R}(e_i,e_k\langle e_k,\partial_r\rangle,e_i,\vec{n})
=v\,\overline{\mathrm{Ric}}(\vec{n},\vec{n})
=v^3\,\mathrm{Ric}(\mathrm{D}u,\mathrm{D}u).
\]
Here $\bar{R}$ and $\overline{\mathrm{Ric}}$ denote the curvature tensor and Ricci curvature of the ambient Lorentzian cylinder, while $\mathrm{Ric}$ denotes the Ricci curvature of $N$.

Since $p$ is arbitrary, the proof is completed.
\end{proof}
\subsection{Lorentzian warped products}

Let $M$ and $M_f$ be two Lorentzian manifolds with the same underlying smooth manifold, endowed with metrics $g$ and $e^{2f}g$, respectively. Let $\Sigma$ be a fixed smooth oriented hypersurface, and denote its mean curvature in $M$ and $M_f$ by $\mathrm{H}$ and $\mathrm{H}_f$, respectively. Following the same derivation as in \cite[Lemma 3.1]{zhou19}, we have
\begin{equation}
\mathrm{H}_f=e^{-f}(\mathrm{H}+n\,\mathrm{d}f(\vec{v})),
\end{equation}
where the dimension of $M$ is $n+1$ and $\vec{v}$ is the unit normal vector of $\Sigma$ in $M$.

Recall that $\mathcal{L}_\phi$ is endowed with the metric $\phi^2(r)(\sigma-dr^2)$, while $\mathcal{L}$ has the metric $\sigma-dr^2$. From \eqref{ML1}, the mean curvature of $\Sigma$ in the Lorentzian warped product $\mathcal{L}_\phi$ is
\begin{equation}\label{eq:divergence}
\mathrm{H}_{\log\phi}=\frac{1}{\phi}\left(\mathrm{div}\left(\frac{\mathrm{D}u}{\sqrt{1-|\mathrm{D}u|^2}}\right)+\frac{\phi'(u)}{\phi(u)}\frac{n}{\sqrt{1-|\mathrm{D}u|^2}}\right).
\end{equation}
where $\mathrm{div}$ and $\rD$ is the divergence and the gradient with respect to the metric of $\Omega$. 
This yields the following lemma.

\begin{lemma}\label{lm:meancurvature}
A strictly spacelike graph of $u$ in the Lorentzian warped product $\mathcal{L}_\phi$ is maximal if and only if, on $\Omega$, $u$ satisfies
\begin{equation}\label{MLPhi}
\mathrm{div}\left(\frac{\mathrm{D}u}{\sqrt{1-|\mathrm{D}u|^2}}\right)+\frac{\phi'(u)}{\phi(u)}\frac{n}{\sqrt{1-|\mathrm{D}u|^2}}=0,
\end{equation}
where $n$ is the dimension of $N$. 
\end{lemma}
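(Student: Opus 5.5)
The statement follows by specializing the conformal transformation formula for the mean curvature to the warped product. I will take $M=\mathcal{L}$ with metric $g=\sigma-dr^2$ and $f=\log\phi(r)$, so that $e^{2f}g=\phi^2(r)(\sigma-dr^2)$ is exactly the metric of $\mathcal{L}_\phi$.

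The ambient manifold $\Omega\times\mathbb{R}$ has dimension $n+1$ with $n=\dim N$. The hypersurface $\Sigma$ is the graph of $u$, and it is spacelike because $|\mathrm{D}u|<1$. By the formula $\mathrm{H}_f=e^{-f}(\mathrm{H}+n\,\mathrm{d}f(\vec v))$, it suffices to compute $\mathrm{H}$ and $\mathrm{d}f(\vec v)$ with respect to the upward unit normal $\vec v=\vec n$ of $\Sigma$ in $\mathcal{L}$.

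\emph{Computing $\mathrm{H}$.} By \eqref{ML1},
\[
\mathrm{H}=\operatorname{div}\bigl(\mathrm{D}u/\sqrt{1-|\mathrm{D}u|^2}\bigr).
\]

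\emph{Computing $\mathrm{d}f(\vec n)$.} Since $f$ depends only on $r$, we have $\mathrm{d}f=\frac{\phi'(r)}{\phi(r)}\,dr$. Here $dr$ is the coordinate differential, so $dr(\partial_r)=1$ and $dr(\mathrm{D}u)=0$; this should not be confused with the metric dual of $\partial_r$, which carries a sign because $\langle\partial_r,\partial_r\rangle=-1$. From $\vec n=(\mathrm{D}u+\partial_r)/\sqrt{1-|\mathrm{D}u|^2}$, and evaluating on $\Sigma$ where $r=u$, we get
\[
\mathrm{d}f(\vec n)=\frac{\phi'(u)}{\phi(u)}\,\frac{1}{\sqrt{1-|\mathrm{D}u|^2}}.
\]

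Substituting both quantities gives \eqref{eq:divergence}:
\[
\mathrm{H}_{\log\phi}=\frac{1}{\phi(u)}\left(\operatorname{div}\Bigl(\frac{\mathrm{D}u}{\sqrt{1-|\mathrm{D}u|^2}}\Bigr)+\frac{\phi'(u)}{\phi(u)}\frac{n}{\sqrt{1-|\mathrm{D}u|^2}}\right).
\]

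The graph is maximal in $\mathcal{L}_\phi$ exactly when $\mathrm{H}_{\log\phi}\equiv 0$. Since $\phi>0$, the factor $1/\phi(u)$ never vanishes, so this holds if and only if the bracket vanishes, which is \eqref{MLPhi}. The choice of orientation does not matter: reversing the normal only changes the sign of $\mathrm{H}_f$, not its zero set.

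The only delicate point is the conformal formula itself in the Lorentzian setting, including its sign conventions. I take it as given from \cite[Lemma 3.1]{zhou19}, adapted as the paper indicates. To double-check it I would redo the standard computation. Under $\tilde g=e^{2f}g$ the Levi-Civita connection changes by
\[
\tilde\nabla_XY=\nabla_XY+X(f)Y+Y(f)X-g(X,Y)\nabla f .
\]
The new unit normal is $\tilde v=e^{-f}\vec v$, and an orthonormal frame rescales as $\tilde e_i=e^{-f}e_i$. Tracing $\tilde g(\tilde\nabla_{\tilde e_i}\tilde v,\tilde e_i)$, the $X(f)Y$ term gives a tangential contribution $e_i(f)\vec v$, which is orthogonal to $e_i$. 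The other two terms each contribute $\vec v(f)$ per tangent direction, with the $-g(X,Y)\nabla f$ term giving $+\vec v(f)$ because $g(e_i,e_i)=1$. This recovers $e^{-f}(\mathrm{H}+n\,\vec v(f))$; the normalization $\langle\vec v,\vec v\rangle=-1$ does not affect this trace.
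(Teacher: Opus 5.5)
Your argument is correct and is essentially the paper's own: you specialize $\mathrm{H}_f=e^{-f}(\mathrm{H}+n\,\mathrm{d}f(\vec v))$ to $f=\log\phi(r)$, use \eqref{ML1} for $\mathrm{H}$ and $\mathrm{d}f(\vec n)=\frac{\phi'(u)}{\phi(u)}\frac{1}{\sqrt{1-|\mathrm{D}u|^2}}$ to obtain \eqref{eq:divergence}, and then drop the positive factor $1/\phi(u)$. One slip in your optional check of the conformal formula: with $X=e_i$, $Y=\vec v$, only $Y(f)X=\vec v(f)e_i$ contributes (there $-g(e_i,\vec v)\nabla f=0$, and $e_i(f)\vec v$ is normal rather than tangential and is in fact cancelled by differentiating $e^{-f}$), whereas the $-g(X,Y)\nabla f$ term gives $+\vec v(f)$ only in the equivalent form $-\tilde g(\tilde v,\tilde\nabla_{\tilde e_i}\tilde e_i)$---so there is exactly one contribution of $\vec v(f)$ per direction, not two, which is what yields $n$ rather than $2n$.
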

\subsection{Symmetric maximal hypersurfaces}\label{subsection:SME}

Indeed, solutions to \eqref{MLPhi} admit another geometric interpretation as symmetric maximal hypersurfaces. This serves as the Lorentzian counterpart to the symmetric minimal surfaces in Riemannian geometry studied by Fouladgar and Simon \cite{FS20}. For illustration, we henceforth set $\phi(r)=r$.

Let $\mathbb{R}^{m+1}$ denote the $(m+1)$-dimensional Euclidean space ($m\ge 1$) with coordinates $(y_1,\dots,y_{m+1})$ and the standard Euclidean metric
\[
dy_1^2+\cdots+dy_{m+1}^2.
\]
Consider the generalized Lorentzian manifold $N\times\mathbb{R}^{m+1}$ endowed with the Lorentzian metric
\[
\sigma-dy_1^2-\cdots-dy_{m+1}^2,
\]
denoted by $N\times_L\mathbb{R}^{m+1}$. Let $S^m$ be the unit sphere in $\mathbb{R}^{m+1}$. We define an embedded hypersurface $\Sigma\subset N\times_L\mathbb{R}^{m+1}$ by the map
\[
F:\Omega\times S^m\to N\times\mathbb{R}^{m+1},\qquad F(x,\omega)=(x,u(x)\omega).
\]
In general, $\Sigma_u=F(\Omega\times S^m)$ is a spacelike hypersurface in $N\times_L\mathbb{R}^{m+1}$ provided $|\mathrm{D}u|<1$ on $N$. The unit normal vector of $\Sigma_u$ at $(x,u(x)\omega)$ 
is given by
\begin{equation}\label{normal:vector}
\mathbf{n}=\frac{\mathrm{D}u+\omega}{\sqrt{1-|\mathrm{D}u|^2}},
\end{equation}
where $\mathrm{D}u$ denotes the gradient of $u$ on $N$, and $\omega$ is the unit normal vector of $S^m$ in $\mathbb{R}^{m+1}$. One verifies that $\mathbf{n}$ is timelike, with $\langle \mathbf{n},\mathbf{n}\rangle=-1$. In fact, the following result holds.

\begin{lemma}
The spacelike hypersurface $\Sigma_u$ is maximal in $N\times_L\mathbb{R}^{m+1}$ if and only if $u$ solves \begin{equation}\label{MLphi}
\mathrm{div}\left(\frac{\mathrm{D}u}{\sqrt{1-|\mathrm{D}u|^2}}\right)+\frac{\phi'(u)}{\phi(u)}\frac{m}{\sqrt{1-|\mathrm{D}u|^2}}=0,
\end{equation} with $|\mathrm{D}u|<1$. Here $m$ is independent of the dimension of $N$. 
\end{lemma}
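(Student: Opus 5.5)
We compute the mean curvature of $\Sigma_u$ directly, with $\phi(r)=r$, so that $\phi'(u)/\phi(u)=1/u$. The aim is to show that, up to the positive factor $\sqrt{1-|\mathrm{D}u|^2}$, $\mathrm{H}$ equals the left-hand side of \eqref{MLphi}.

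\textbf{Induced metric.} Take local coordinates $x_1,\dots,x_n$ on $N$ and $\theta_1,\dots,\theta_m$ on $S^m$. The tangent vectors of $\Sigma_u$ are
\[
F_{x_i}=\partial_{x_i}+u_i\,\omega,\qquad F_{\theta_\alpha}=u\,\omega_\alpha .
\]
Since $\langle\omega,\omega_\alpha\rangle=0$, the induced metric splits into two blocks:
\[
g_{ij}=\sigma_{ij}-u_iu_j,\qquad g_{i\alpha}=0,\qquad g_{\alpha\beta}=-u^2\langle\omega_\alpha,\omega_\beta\rangle_{\mathbb{R}^{m+1}}.
\]
Here, because the $\mathbb{R}^{m+1}$ factor is negative definite, the second fundamental form (computed against $\mathbf{n}$) will come out with a sign. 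In any case $\mathbf n$ in \eqref{normal:vector} is orthogonal to both $F_{x_i}$ and $F_{\theta_\alpha}$, and $\langle\mathbf n,\mathbf n\rangle=-1$. One should double-check this, because the signs are the only delicate point.

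\textbf{The $N$-directions.} In these directions the computation is identical to \eqref{ML1}. Along a slice $\omega=\mathrm{const}$, the map $x\mapsto(x,u(x)\omega)$ is exactly the graph of $u$ in a Lorentzian cylinder $N\times\mathbb{R}\omega$. Hence
\[
g^{ij}\langle\bar\nabla_{F_{x_i}}\mathbf n,F_{x_j}\rangle=\operatorname{div}\Bigl(\frac{\mathrm{D}u}{\sqrt{1-|\mathrm{D}u|^2}}\Bigr).
\]
The mixed terms drop out because $g^{i\alpha}=0$.

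\textbf{The sphere directions.} Since $\mathrm{D}u$ does not depend on $\omega$,
\[
\bar\nabla_{F_{\theta_\alpha}}\mathbf n=\frac{\omega_\alpha}{\sqrt{1-|\mathrm{D}u|^2}} .
\]
This gives
\[
\langle\bar\nabla_{F_{\theta_\alpha}}\mathbf n,F_{\theta_\beta}\rangle=\frac{u\,\langle\omega_\alpha,\omega_\beta\rangle_L}{\sqrt{1-|\mathrm{D}u|^2}},
\]
where $\langle\cdot,\cdot\rangle_L$ is the Lorentzian pairing, which on $\mathbb{R}^{m+1}$ is minus the Euclidean one. Tracing against $g^{\alpha\beta}=u^{-2}(\langle\omega_\alpha,\omega_\beta\rangle_L)^{-1}$, computed with the same Lorentzian pairing so that it is consistent with $g_{\alpha\beta}$, produces
\[
\frac{m}{u\sqrt{1-|\mathrm{D}u|^2}}.
\]
The two signs cancel, which is the main thing to verify carefully.

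\textbf{Conclusion.} Adding the two contributions,
\[
\mathrm{H}=\operatorname{div}\Bigl(\frac{\mathrm{D}u}{\sqrt{1-|\mathrm{D}u|^2}}\Bigr)+\frac{m}{u\sqrt{1-|\mathrm{D}u|^2}},
\]
which is \eqref{MLphi} with $\phi(r)=r$. Therefore $\mathrm{H}=0$ if and only if $u$ solves \eqref{MLphi}.

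\textbf{Cross-check and main obstacle.} Alternatively, the $m\ge1$ sphere variables can be integrated out. This gives $\mathrm{vol}(\Sigma_u)=|S^m|\int_\Omega u^m\sqrt{1-|\mathrm{D}u|^2}$, and the first variation of this functional under $\omega$-independent variations yields the same Euler--Lagrange equation. This confirms both the sign and the constant $m$. I would use this as a sanity check, since for rotationally invariant $\Sigma_u$ criticality for symmetric variations is equivalent to $\mathrm{H}=0$: the mean curvature is itself $\omega$-invariant. The main obstacle is the sign bookkeeping in the sphere-direction trace.
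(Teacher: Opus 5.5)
Your proposal is correct and follows essentially the same route as the paper: the induced metric splits into the $N$-block and the sphere block, the $N$-block trace reduces to the graph mean curvature \eqref{ML1}, and the sphere block contributes $m/(u\sqrt{1-|\mathrm{D}u|^2})$. The sign check you flag does go through, since the negative $g^{\alpha\beta}$ cancels the negative Lorentzian pairing; your variational cross-check via $\int_\Omega u^m\sqrt{1-|\mathrm{D}u|^2}$ confirms the sign but is not needed.
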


\begin{rem}
This justifies calling \eqref{MLPhi} the symmetric maximal surface equation, in analogy with \cite{FS20}.
\end{rem}

\begin{proof}
It suffices to show that the mean curvature of $\Sigma_u$ in $N\times_L\mathbb{R}^{m+1}$ is
\begin{equation}\label{mean:curvature}
\mathrm{div}(\mathbf{n})
=\mathrm{div}\left(\frac{\mathrm{D}u}{\sqrt{1-|\mathrm{D}u|^2}}\right)
+\frac{1}{u(x)}\frac{m}{\sqrt{1-|\mathrm{D}u|^2}}.
\end{equation}
Fix a point $p_0=(x_0,u(x_0)\omega_0)\in\Sigma_u$. Let $\{x_1,\dots,x_n\}$ be local coordinates near $x_0$, and let $\{e_{n+1},\dots,e_{n+m}\}$ be a local orthonormal frame near $\omega_0$ on $S^m$. Then a local frame on $\Sigma_u$ near $p_0$ is given by $\{X_1,\dots,X_{n+m}\}$, where
\[
X_i=\frac{\partial}{\partial x_i}+\frac{\partial u}{\partial x_i}\omega,\quad i=1,\dots,n,\qquad
X_i=u e_i,\quad i=n+1,\dots,n+m.
\]
Set $g_{ij}=\langle X_i,X_j\rangle$. By definition,
\[
g_{ij}=0 \quad (i=1,\dots,n,\ j=n+1,\dots,n+m),
\]
\[
g_{ij}=-u^2\delta_{ij} \quad (i,j=n+1,\dots,n+m),
\]
and
\[
g_{ij}=\sigma_{ij}-u_i u_j \quad (i,j=1,\dots,n),
\]
where $\sigma_{ij}=\sigma(\partial_{x_i},\partial_{x_j})$ and $u_i=\partial u/\partial x_i$. Let $(g^{ij})=(g_{ij})^{-1}$, $(\sigma^{ij})=(\sigma_{ij})^{-1}$, and $u^i=\sigma^{ik}u_k$. Consequently,
\[
g^{ij}=0 \quad (i=1,\dots,n,\ j=n+1,\dots,n+m),
\]
\[
g^{ij}=-\frac{\delta^{ij}}{u^2} \quad (i,j=n+1,\dots,n+m),
\]
and
\[
g^{ij}=\sigma^{ij}+\frac{u^i u^j}{1-|\mathrm{D}u|^2} \quad (i,j=1,\dots,n).
\]

Let $\bar{\nabla}$ denote the covariant derivative on $N\times_L\mathbb{R}^{m+1}$. The divergence of $\mathbf{n}$ at $p_0=(x_0,u(x_0)\omega_0)$ is given by
\begin{equation}\label{st:A}
\mathrm{div}(\mathbf{n})(p_0)
=\sum_{i,j=1}^{n}g^{ij}\langle \bar{\nabla}_{X_i}\mathbf{n},X_j\rangle(p_0)
-\sum_{i,j=n+1}^{n+m}g^{ij}\langle \bar{\nabla}_{X_i}\mathbf{n},X_j\rangle(p_0),
\end{equation}
where $\mathrm{div}$ denotes the divergence with respect to the Lorentzian metric on $N\times_L\mathbb{R}^{m+1}$. The second term satisfies
\[
-\sum_{i,j=n+1}^{n+m}g^{ij}\langle \bar{\nabla}_{X_i}\mathbf{n},X_j\rangle(p_0)
=\frac{m}{u(x_0)\sqrt{1-|\mathrm{D}u|^2}},
\]
evaluated at $(x_0,u(x_0)\omega_0)$, where $m/u(x)$ is the mean curvature of the sphere of radius $u(x)$ in $\mathbb{R}^{m+1}$.

Since the restriction of the Lorentzian metric on $N\times_L\mathbb{R}^{m+1}$ to $N$ coincides with the Riemannian metric on $N$, we have
\[
\begin{aligned}
\sum_{i,j=1}^{n}g^{ij}\langle \bar{\nabla}_{X_i}\mathbf{n},X_j\rangle(p_0)
&=\sum_{i,j=1}^{n}g^{ij}\cdot\frac{1}{\sqrt{1-|\mathrm{D}u|^2}}
\left\langle\bar{\nabla}_{\partial_{x_i}}\mathrm{D}u,\partial_{x_j}\right\rangle\\
&=\mathrm{div}\left(\frac{\mathrm{D}u}{\sqrt{1-|\mathrm{D}u|^2}}\right).
\end{aligned}
\]
Substituting the above two identities into \eqref{st:A} yields \eqref{mean:curvature}. This completes the proof.
\end{proof}

\section{The radial solution of the symmetric maximal surface equation}

Throughout this section, $\alpha$ denotes a fixed positive constant. We consider the Dirichlet problem for the following nonlinear partial differential equation:
\begin{equation}\label{eq:pde}
\operatorname{div}\!\left(\frac{\mathrm{D}u}{\sqrt{1-|\mathrm{D}u|^2}}\right)
=-\frac{\alpha}{u}\frac{1}{\sqrt{1-|\mathrm{D}u|^2}},
\qquad x\in B_R(0),\quad u=0\text{ on }\partial B_R(0),
\end{equation}
where $B_R(0)\subset\mathbb{R}^n$ is the open ball of radius $R$ centered at the origin, $n\ge 1$, and $u>0$ in $B_R(0)$.

The main result of this section is as follows.

\begin{theorem}\label{thm:main:dke}
For any $n\ge 1$, $\alpha>0$, and $R>0$,  problem \eqref{eq:pde} admits a unique radially symmetric solution $u(x)=u(|x|)$ satisfying:
\begin{enumerate}
\item $u\in C^2[0,R)$, $u(r)>0$ for $r\in[0,R)$, $u'(0)=0$, $\lim_{r\rightarrow R^{-}}u(r)=0$;
\item $-1<u'(r)<0$ for $r\in (0,R)$, $\lim_{r\rightarrow R^{-}}u'(r)=-1$.
\end{enumerate}
\end{theorem}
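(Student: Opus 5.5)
The plan is to reduce \eqref{eq:pde} to an ODE and solve it by a shooting/fixed-point argument, as the outline in the introduction suggests (Banach's fixed point theorem). For radial $u=u(r)$, set $w=u'/\sqrt{1-u'^2}$, so that the equation becomes
\[
\bigl(r^{n-1}w\bigr)'=-\frac{\alpha}{u}\,r^{n-1}\sqrt{1+w^2},\qquad u'=\frac{w}{\sqrt{1+w^2}},
\]
since $1/\sqrt{1-u'^2}=\sqrt{1+w^2}$. Because the equation is invariant under the scaling $u_\lambda(x)=\lambda u(x/\lambda)$ (both sides scale like $1/\lambda$), I would not shoot in $R$ directly. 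Instead I prescribe $u(0)=a>0$, $u'(0)=0$, solve the initial value problem, and let $R(a)$ be the first zero of $u$. By scaling, $R(a)=aR(1)$, so every $R$ is attained with $a=R/R(1)$.

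\textbf{Step 1 (local existence).} Near $r=0$ I rewrite the ODE as the integral equation
\[
w(r)=-\frac{\alpha}{r^{n-1}}\int_0^r \frac{s^{n-1}\sqrt{1+w(s)^2}}{u(s)}\,ds,\qquad u(r)=a+\int_0^r\frac{w}{\sqrt{1+w^2}}\,ds,
\]
and show this map is a contraction on $\{(u,w)\in C[0,\delta]^2: |u-a|\le a/2,\ |w|\le 1\}$ for small $\delta$. The singular factor $r^{1-n}$ is harmless because $r^{1-n}\int_0^r s^{n-1}ds=r/n$. This gives a $C^2$ solution with $u'(0)=0$ and $w(r)\approx-\alpha r/(na)$. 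Standard ODE theory then continues the solution as long as $u>0$ and $w$ stays finite. Note that $|u'|<1$ holds automatically as long as $w$ is finite.

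\textbf{Step 2 (monotonicity and reaching zero).} From the integral formula, $w<0$ for $r>0$, so $-1<u'<0$ and $u$ is strictly decreasing. Moreover, $(r^{n-1}w)'\le-\alpha r^{n-1}/u(0)$ gives $w\le-\alpha r/(na)$. Hence $u'\le -c<0$ once $r$ is bounded away from $0$, and $u$ reaches $0$ at some finite $R(a)$. I must also rule out blow-up of $w$ before $u$ vanishes. On any interval where $u\ge\epsilon>0$, we have $(r^{n-1}w)'\ge -C r^{n-1}(1+|w|)$, and Gronwall keeps $w$ finite. So the maximal interval is exactly $[0,R(a))$ with $u\to0$.

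\textbf{Step 3 (boundary slope).} I expect this to be the main obstacle: showing $u'\to-1$, i.e.\ $w\to-\infty$, as $r\to R^-$. Near $R$, $r^{n-1}$ is comparable to a constant, and the equation gives
\[
w'\le -\frac{\alpha}{u}\sqrt{1+w^2}+\frac{(n-1)|w|}{r},
\]
with $u'\ge-1$. Hence $u(r)\le R-r$, so
\[
\int^R\frac{dr}{u}\ \ge\ \int^R\frac{dr}{R-r}=\infty.
\]
Writing $\theta=\operatorname{arcsinh}(-w)$, the inequality becomes $\theta'\ge \alpha/u-C\tanh\theta$. Integrating, $\theta\to\infty$, which is exactly $u'\to-1$. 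Since $u'$ is monotone-ish and bounded, $u$ extends continuously with $u(R)=0$.

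\textbf{Step 4 (uniqueness).} Any radial solution with properties (1)–(2) solves the same initial value problem with $a=u(0)$. Because $(u,w)$ is determined by $a$ via the contraction, and scaling gives $R(a)=aR(1)$, which is strictly increasing in $a$, the value $a$ is determined by $R$. This yields uniqueness.
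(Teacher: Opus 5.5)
Your proposal is correct and follows essentially the same route as the paper: reduction to the first-order system for $(u,w)$, the scaling $u_\lambda(r)=\lambda u(r/\lambda)$, a Banach fixed-point argument near $r=0$, continuation until $u$ vanishes with $w\to-\infty$, and uniqueness from the initial value problem together with scaling. Your direct bound $w\le -\alpha r/(na)$ and your observation $\int^R dr/u\ge\int^R dr/(R-r)=\infty$ are cleaner than the paper's corresponding steps. The paper instead argues finiteness of $R^*$ by contradiction, and only briefly claims that repeating its $\operatorname{arcsinh}$ estimate forces $v\to-\infty$ at the zero of $u$.
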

\begin{rem}
The condition $u'(0)=0$ ensures that $u(x)\in C^2(B_R(0))$; smoothness then follows from \eqref{eq:pde} by Schauder estimates.
\end{rem}
\begin{proof}[Proof of Theorem \ref{thm:main:dke}]Our proof is divided into five steps. 
\subsection*{Step One: Reduction to an integral equation}. 
First we write the radial solution to \eqref{eq:pde} into a system of ordinary differential equations. \\
\indent Let $u(x)=u(r)$, where $r=|x|$. Then $\mathrm{D}u=u'(r)\frac{x}{r}$ and $|\mathrm{D}u|^2=u'(r)^2$ for $x\neq 0$. For radial functions,
\[
\operatorname{div}\!\left(u(r)\frac{x}{r}\right)=u'(r)+\frac{n-1}{r}u(r).
\]
Setting
\begin{equation}
v(r)=\frac{u'(r)}{\sqrt{1-u'(r)^2}},
\end{equation}
we obtain
\[
u'(r)=\frac{v(r)}{\sqrt{1+v(r)^2}},\qquad
\sqrt{1-u'(r)^2}=\frac{1}{\sqrt{1+v(r)^2}}.
\]
As a result, we conclude that the radial solution to equation \eqref{eq:pde} is equivalent to the first-order system of ordinary differential equations givens by 
\begin{equation}\label{eq:system}
\begin{cases}
u'(r)=\dfrac{v(r)}{\sqrt{1+v(r)^2}},\\[8pt]
v'(r)=-\dfrac{n-1}{r}\,v(r)-\dfrac{\alpha}{u(r)}\sqrt{1+v(r)^2},
\end{cases}
\end{equation}
with initial conditions $u(0)=u_0>0$ and $v(0)=0$, the latter corresponding to $u'(0)=0$.
\subsection*{Step Two: Scaling property} Suppose $(u(r),v(r))$ is a solution of system \eqref{eq:system} on $[0,R^*]$ with $u(R^*)=0$, $u(0)=u_0$ for some $R^*>0$. Of course we also have the condition $u'(0)=v(0)=0$. For any $\lambda>0$, define the functions by 
\[
u_\lambda(r)=\lambda\,u\!\left(\frac{r}{\lambda}\right),\qquad
v_\lambda(r)=v\!\left(\frac{r}{\lambda}\right). 
\]
A direct computation shows that $u_\lambda(r)$ and $v_{\lambda}(r)$
also solve \eqref{eq:system} on $[0,\lambda R^*]$, with $u_\lambda(0)=\lambda u_0$ and $u_\lambda(\lambda R^*)=0$.\\
\indent 
Taking $\lambda=R/R^*$, the function $u_R(r)=\lambda u(r/\lambda)$ satisfies $u_R(R)=0$ on $[0,R]$. By uniqueness of the initial value problem \eqref{eq:pde}, the solution for a given $R$ is also unique. We conclude that if the conclusion in Theorem \ref{thm:main:dke} holds for some $R^*>0$, then it holds for all $R>0$.

Therefore, it suffices to show that there exist some $u(0)=u_0>0$  such that the solution of the initial value problem reaches $u=0$ at some finite radius $R^*$.

\subsection*{Step Three: An existence result near $0$.}
Fix $u_0=1$ (to be adjusted later by scaling). Multiplying the equation for $v$ in system \eqref{eq:system} by $r^{n-1}$ gives
\[
\bigl(r^{n-1}v(r)\bigr)'=-\alpha\,r^{n-1}\frac{\sqrt{1+v(r)^2}}{u(r)}.
\]
Since $v(0)=0$, integrating from $0$ to $r$ yields
\[
r^{n-1}v(r)=-\alpha\int_0^r\frac{s^{n-1}\sqrt{1+v(s)^2}}{u(s)}\,ds,
\]
that is,
\begin{equation}\label{eq:integral}
v(r)=-\frac{\alpha}{r^{n-1}}\int_0^r\frac{s^{n-1}\sqrt{1+v(s)^2}}{u(s)}\,ds,\quad u(s)=u_0+\int_0^s\frac{v(t)}{\sqrt{1+v(t)^2}}\,dt.
\end{equation}

Define the integral operator $T$ by
\begin{equation}\label{eq:operator}
(Tv)(r)=-\frac{\alpha}{r^{n-1}}\int_0^r\frac{s^{n-1}\sqrt{1+v(s)^2}}{\,1+\displaystyle\int_0^s\frac{v(t)}{\sqrt{1+v(t)^2}}\,dt\,}\,ds.
\end{equation}
We seek a fixed point of $T$, i.e., a function $v$ satisfying $v=Tv$.

Take $u_0=1$ and define the weighted space
\[
X_\delta=\left\{v\in C[0,\delta]\,:\,v(0)=0,\;\|v\|_*:=\sup_{r\in[0,\delta]}\frac{|v(r)|}{r}\le M\right\},
\]
where $\delta>0$ and $M>0$ are constants to be chosen. The space $(X_\delta,\|\cdot\|_*)$ is completed.

For $v\in X_\delta$, we have $|v(t)|\le Mt$. Since $x\mapsto x/\sqrt{1+x^2}$ is $1$-Lipschitz, it follows that
\[
\left|\frac{v(t)}{\sqrt{1+v(t)^2}}\right|\le|v(t)|\le Mt,
\]
and hence
\[
\left|\int_0^s\frac{v(t)}{\sqrt{1+v(t)^2}}\,dt\right|\le\frac{Ms^2}{2}\le\frac{M\delta^2}{2}.
\]
Choose $\delta$ such that $\frac{M\delta^2}{2}\le\frac{1}{2}$; then
\begin{equation}\label{eq:u-lower}
u(s)=1+\int_0^s\frac{v(t)}{\sqrt{1+v(t)^2}}\,dt\ge\frac{1}{2}>0.
\end{equation}

Using \eqref{eq:u-lower}, for $v\in X_\delta$ we estimate
\[
|Tv(r)|\le\frac{\alpha}{r^{n-1}}\int_0^r\frac{s^{n-1}\sqrt{1+M^2s^2}}{1/2}\,ds
\le 2\alpha\sqrt{1+M^2\delta^2}\cdot\frac{r}{n}
=2\frac{\alpha}{n}\sqrt{1+M^2\delta^2}\,r.
\]
Thus $\|Tv\|_*\le 2\frac{\alpha}{n}\sqrt{1+M^2\delta^2}$. Choose $M\ge 4\alpha/n$, and then pick $\delta$ sufficiently small so that $2\frac{\alpha}{n}\sqrt{1+M^2\delta^2}\le M$; then $T$ maps $X_\delta$ into itself.

To see that $T$ is a contraction, let $v_1,v_2\in X_\delta$, and set $u_i(s)=1+\int_0^s v_i(t)/\sqrt{1+v_i(t)^2}\,dt$. Then
\[
|Tv_1(r)-Tv_2(r)|\le\frac{\alpha}{r^{n-1}}\int_0^r s^{n-1}\left|\frac{\sqrt{1+v_1(s)^2}}{u_1(s)}-\frac{\sqrt{1+v_2(s)^2}}{u_2(s)}\right|ds.
\]
Using the inequality
\[
\left|\frac{a_1}{b_1}-\frac{a_2}{b_2}\right|\le\frac{|a_1-a_2|}{b_1}+\frac{a_2\,|b_1-b_2|}{b_1 b_2},
\]
together with the fact that both $\sqrt{1+x^2}$ and $x/\sqrt{1+x^2}$ are $1$-Lipschitz, we obtain
\[
|\sqrt{1+v_1^2}-\sqrt{1+v_2^2}|\le|v_1-v_2|\le s\,\|v_1-v_2\|_*,
\]
and
\[
|u_1(s)-u_2(s)|\le\int_0^s|v_1(t)-v_2(t)|\,dt\le\frac{s^2}{2}\,\|v_1-v_2\|_*.
\]
Substituting these estimates (using $u_i\ge 1/2$ and $\sqrt{1+v_2^2}\le\sqrt{1+M^2\delta^2}$) yields
\[
\left|\frac{\sqrt{1+v_1^2}}{u_1}-\frac{\sqrt{1+v_2^2}}{u_2}\right|
\le 2s\,\|v_1-v_2\|_*+4\sqrt{1+M^2\delta^2}\,s^2\,\|v_1-v_2\|_*.
\]
Consequently,
\[
\|Tv_1-Tv_2\|_*\le\alpha\left(\frac{2}{n+1}\,\delta+\frac{4}{n+2}\sqrt{1+M^2\delta^2}\,\delta^2\right)\|v_1-v_2\|_*.
\]
Choosing $\delta$ sufficiently small so that the factor in parentheses is less than $1/2$, we conclude that $T$ is a contraction. By Banach's fixed-point theorem, the integral equation \eqref{eq:integral} admits a unique solution $v\in X_\delta$ on $[0,\delta]$. Correspondingly,
\[
u(r)=1+\int_0^r \frac{v(t)}{\sqrt{1+v(t)^2}}\,dt
\]
satisfies $u'(0)=0$ and $u>0$ on $[0,\delta]$, and $(u,v)$ is a solution of system \eqref{eq:system} on $[0,\delta]$. 
\subsection*{Step Four: Expansion near $0$.}
Indeed, there exists a $\delta>0$ such that $u'(r)<0$ and $v(r)<0$ on $(0,\delta]$. 
As $r\to 0^+$, continuity of $u$ and $v$ with $u(0)=u_0$ and $v(0)=0$ gives $u=u_0+O(r)$,  $v=O(r)$ and
\[
\frac{s^{n-1}\sqrt{1+v(s)^2}}{u(s)}=\frac{s^{n-1}}{u_0}(1+O(s)).
\]
From \eqref{eq:integral}, it follows that $
v(r)=-\frac{\alpha}{n}\frac{r}{u_0}+O(r^2)$
and consequently
\[
\frac{v(r)}{\sqrt{1+v(r)^2}}=-\frac{\alpha}{n}\frac{r}{u_0}+O(r^2).
\]
Integrating again gives
\[
u(r)=u_0-\frac{\alpha}{2n}\frac{r^2}{u_0}+O(r^3).
\]
 Since $u'(r)=\dfrac{v(r)}{\sqrt{1+v(r)^2}}$, a direct continuous verification shows that  $v(r)<0$, $u'(r)<0$ on $(0,\delta]$. From \eqref{eq:integral}, $v'(r)<0$ on $(0,\delta]$.\\
 Define 
 \[ 
 R^*:=\sup\{\delta:  \quad u(r)>0, v(r)<0 \,\text{is finite}, \quad \text{on}\quad (0,\delta]\}.\] 
\subsection*{Step 3: $R^*$ is finite.}
Suppose $R^*$ is infinity. We claim that only $\lim_{r\to\infty}u=0$ can happen.  Notice that $u(r)$ is strictly decreasing on $(0,+\infty)$. Otherwise, there exists $\beta>0$ such that $2\beta\ge u(r)\ge\beta$ for all sufficiently large $r$. Then it holds that 
\begin{equation}\label{det:uvb}
v'=-\frac{n-1}{r}v-\frac{\alpha}{u}\sqrt{1+v^2}\le -\frac{\alpha}{u}\le -\frac{\alpha}{2\beta},
\end{equation}
for  sufficiently large $r$ and \begin{equation}\label{det:stb} v(r)\le v(r_0)-\frac{\alpha}{2\beta}(r-r_0).
\end{equation} This implies that  $u'=v/\sqrt{1+v^2}\to-1$, so $u$ would become negative in finite time, contradicting $u\ge\beta>0$. Hence $\lim_{r\to\infty}u(r)=0$.

Fix $\varepsilon>0$. Then there exists $r_\varepsilon$ such that $u(r)<\varepsilon/2$ for all $r>r_\varepsilon$. Since $v=\sqrt{1+(v)^2}u'<0$, we have
\begin{equation}\label{eq:A}
v'=-\frac{n-1}{r}v-\frac{\alpha}{u}\sqrt{1+v^2}\le -\frac{\alpha}{2u}\sqrt{1+v^2}\le -\frac{\alpha}{\varepsilon}\sqrt{1+v^2}.
\end{equation}
Thus
\begin{equation}\label{eq:B}
\frac{v'}{\sqrt{1+v^2}}\le -\frac{\alpha}{\varepsilon}.
\end{equation}
Noting that $(\operatorname{arcsinh} v)'=v'/\sqrt{1+v^2}$, integration yields
\begin{equation}\label{eq:C}
\operatorname{arcsinh} v(r)\le \operatorname{arcsinh} v(r_\varepsilon)-\frac{\alpha}{\varepsilon}(r-r_\varepsilon).
\end{equation}
Hence
\begin{equation}\label{eq:D}
v(r)\leq\sinh\!\left(\operatorname{arcsinh} v(r_\varepsilon)-\frac{\alpha}{\varepsilon}(r-r_\varepsilon)\right)\to-\infty,
\end{equation}
as $r\to\infty$. Consequently, there exists $r_1>r_\varepsilon$ such that $|v(r)|>1$ for all $r>r_1$. Then $u'(r)=v/\sqrt{1+v^2}<-1/\sqrt{2}$ for $r>r_1$, which implies
\[
u(r)<u(r_1)-\frac{1}{\sqrt{2}}(r-r_1).
\]
Thus $u(r)<0$ for sufficiently large $r$, a contradiction. Therefore $R^*$ is finite.\\
\indent \indent By ODE theory and \eqref{eq:integral}, the solution can be uniquely continued along $(0,\delta]$, until case (i): either $u$ reaches $0$ or case (ii): $v(r)$ blows up as $r\rightarrow \delta^{-}$. Repeating the derivation in \eqref{eq:A}-\eqref{eq:D} yields  that case (i) implies that case (ii). Namely $v(r)$ always go to $-\infty$ as $r\to R^{*-}$. Since $u'(r)=v(r)/\sqrt{1+v(r)^2}$, this implies that $\lim_{r\to (R^*)^{-}}u'(r)=-1$ and $-1<u'(r)<0$ for all $r\in(0,R^*)$.  \\
\indent Finally we show that $\lim_{r\rightarrow R^{*-}}u=0$. Otherwise, there exists a constant $\beta>0$ such that $2\beta \geq u(r)\geq \beta>0$ on $(0,R^{*})$. From \eqref{det:uvb} and \eqref{det:stb}, it holds that 
\[ 
\frac{v'}{\sqrt{1+v^2}}\geq -\frac{c(R^*)}{\sqrt{1+v^2}}-\frac{\alpha}{\beta}\geq -c(R*)-\frac{\alpha}{\beta}
\]
on $(0,R^*)$. Here $c(R^*)$ is a fixed positive constant. For fixed $r_0\in (0, R^*)$, this gives that  
\begin{equation}
\operatorname{arcsinh}  v(r)\geq \operatorname{arcsinh} v(r_0)- (c(R*)+\frac{\alpha}{\beta})(r-r_0).
\end{equation}
Since $R^*$ is finite, this contradicts that $v(r)$ approaches to $-\infty$ as $r\to R^{*-}$. Hence it always holds that $\lim_{r\to R^{*-}}u=0$. This gives the existence of Theorem \ref{thm:main:dke} for some $R^*$. For $R^*>0$, ($u(0)=1$), the uniqueness of Theorem \ref{thm:main:dke} follows from \eqref{eq:system} for $u_0=1$. Hence we conclude Theorem \ref{thm:main:dke} for some $R^*$. \\
\indent By Step Two, we complete the proof of Theorem \ref{thm:main:dke} for any $R>0$. 
\end{proof}

\section{Existence part of Theorem \ref{main:thm:A}}

In this section, we prove part (1) of Theorem \ref{main:thm:A}, which is stated as follows.

\begin{theorem}\label{main:thm:A1}
Fix any $m>0$. Let $\Omega$ be a smooth bounded domain in a Riemannian manifold $N$ with metric $\sigma$. Then there exists a unique function $u\in C^\infty(\Omega)\cap C(\overline{\Omega})$ solving \eqref{infinity:Dirichlet:problem} such that $u$ is positive and strictly spacelike in $\Omega$. Moreover $|Du|=1$ on $\P\Omega$. 
\end{theorem}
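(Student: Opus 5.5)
The plan is an approximation argument. For $\epsilon>0$ we solve the regularized Dirichlet problem
\[
\operatorname{div}\!\left(\frac{\rD u_\epsilon}{\sqrt{1-|\rD u_\epsilon|^2}}\right)+\frac{m}{u_\epsilon\sqrt{1-|\rD u_\epsilon|^2}}=0 \ \text{in }\Omega,\qquad u_\epsilon=\epsilon \ \text{on }\partial\Omega .
\]
This is \eqref{general:Dirichlet:problem} with $\phi(r)=r$, suitably modified for $r\le \epsilon/2$ so that $\phi$ stays smooth and positive. Constant boundary data is acausal, so Gerhardt's theorem \cite[Theorem 5.1]{gerhardt1983h} yields a smooth strictly spacelike solution $u_\epsilon$, provided we have a priori height bounds.

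\textbf{Height bounds and monotonicity.} The maximum principle applies, since the zeroth order term $m/u$ is decreasing in $u$. From it we get the following.
\begin{enumerate}
\item $u_\epsilon\ge\epsilon$. At an interior minimum the divergence term is $\ge 0$ while $m/u>0$, which is impossible.
\item $u_\epsilon\le \epsilon+\operatorname{diam}\Omega$, because $|\rD u_\epsilon|<1$.
\item $u_\epsilon$ is monotone in $\epsilon$, by comparison.
\end{enumerate}

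\textbf{Key interior lower bound.} This step corresponds to Lemma \ref{lm:co:bound}. For $x_0\in\Omega$ with $B_R(x_0)\subset\Omega$, where $R$ is smaller than the injectivity and convexity radius, we compare $u_\epsilon$ with the radial solution $w_R$ of Theorem \ref{thm:main:dke}, using $\alpha$ slightly smaller than $m$ to absorb the curvature error of the distance function in $N$. Thus $w_R$ is a strict subsolution on $B_R(x_0)$ that vanishes on $\partial B_R$, while $u_\epsilon\ge\epsilon>0$ there. Comparison gives $u_\epsilon\ge w_R$, so $u_\epsilon\ge c(\operatorname{dist}(x,\partial\Omega))>0$ uniformly in $\epsilon$ on compact subsets.

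\textbf{Main obstacle: interior gradient bound.} The heart of the proof is an interior estimate $\sup_K v_\epsilon\le C(K)$ for $v=1/\sqrt{1-|\rD u|^2}$, in the spirit of Bartnik \cite[Theorem 3.1]{Bartnik89}. By Lemma \ref{lm:laplacian}, together with the equation $\mathrm{H}=-mv/u$ (so $\nabla\mathrm{H}$ is computable), we obtain a differential inequality for $v$ on $\Sigma$. We then apply the maximum principle to a test function $\eta\,v$. Here $\eta$ is built from a time function, for instance $\eta=(u-\tfrac12 c+\dots)$ or $\eta=(\tau-\rho^2)_+$ with $\rho$ the distance to $x_0$, and it is cut off using the positive lower bound for $u$. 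The bound $u\ge c>0$ on $K$ is essential, because it controls the singular term $m/u$ and its gradient. The term $\mathrm{Ric}(\rD u,\rD u)v^3$ is bounded below by $-Cv^3$, and it is handled by choosing the exponent in the cutoff appropriately; we expect this to be the most delicate computation.

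\textbf{Passing to the limit.} Once the gradient bound holds, the equation is uniformly elliptic on compact sets. Schauder theory then gives $C^{k}$ bounds, and the monotone limit $u=\lim_{\epsilon\to0}u_\epsilon$ is smooth and strictly spacelike, with $u>0$ in $\Omega$. We also have $u\le \operatorname{dist}(x,\partial\Omega)$ from the spacelike condition and $u\ge w_R$, so $u\in C(\overline\Omega)$ with $u=0$ on $\partial\Omega$.

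\textbf{Boundary behavior.} To show $|\rD u|\to1$ on $\partial\Omega$, we use interior balls tangent at boundary points. By scaling, $w_R(r)\sim (R-r)$ near $\partial B_R$, since $w_R'\to-1$. Combined with $u\le\operatorname{dist}(\cdot,\partial\Omega)$, this squeezes the normal derivative to $1$. To get the full gradient rather than only the normal derivative, one can use a blow-up argument: rescalings of $u$ converge to a solution on a half-space, which must be the distance function.

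\textbf{Uniqueness.} Uniqueness follows from a comparison principle for two solutions vanishing on $\partial\Omega$. We compare $u_1$ with $(1+\delta)u_2$, which is a supersolution by the $1/u$ scaling structure, and then let $\delta\to0$.
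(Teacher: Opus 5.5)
Your outline follows the paper's own proof. Like the paper, you solve approximating problems with boundary value $\epsilon$ via Gerhardt's theorem after truncating $1/u$, and use monotonicity in $\epsilon$. You also use the radial barrier of Theorem~\ref{thm:main:dke} with $\alpha<m$ on small balls, a Bartnik-type maximum principle for $v$ with a cutoff in the height, and the tangent-ball squeeze at $\partial\Omega$. For the height cutoff, the paper uses $\operatorname{arcosh}v+f(u)$ with $f=n\log(u-c_0)$ near $c_0$, continued linearly.

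\textbf{Uniqueness fails as proposed.} Set $Q(w)=\operatorname{div}\big(\mathrm{D}w/\sqrt{1-|\mathrm{D}w|^2}\big)+m/\big(w\sqrt{1-|\mathrm{D}w|^2}\big)$. This operator is not homogeneous, so $\lambda u_2$ with $\lambda=1+\delta$ is not a supersolution. Write $p=|\mathrm{D}u_2|$ and let $b$ be the second derivative of $u_2$ in the direction $\mathrm{D}u_2/p$. Using the equation for $u_2$ to eliminate the transverse part of the Hessian gives
\[
\sqrt{1-\lambda^2p^2}\,Q(\lambda u_2)=\frac{m}{u_2}\Big(\frac1\lambda-\lambda\Big)+\lambda b\Big(\frac{1}{1-\lambda^2p^2}-\frac{1}{1-p^2}\Big)\approx 2\delta\Big(\frac{b\,p^2}{(1-p^2)^2}-\frac{m}{u_2}\Big).
\]
So $Q(\lambda u_2)\le 0$ would require $b\,p^2u_2\le m(1-p^2)^2$. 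This is a concavity condition you have no control over, and it is hopeless where $p\to 1$, i.e.\ near $\partial\Omega$; there $(1+\delta)u_2$ is not even spacelike. Consequently, at an interior maximum of $u_1-(1+\delta)u_2$ there is nothing to contradict.

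The fix is an additive shift. First, $Q(u_2+\delta)=\frac{m}{\sqrt{1-|\mathrm{D}u_2|^2}}\big(\frac{1}{u_2+\delta}-\frac{1}{u_2}\big)<0$, and $u_2+\delta>u_1$ on $\partial\Omega$. At a positive interior maximum of $u_1-u_2-\delta$ the gradients are equal, the Hessians are ordered, and $-m/u$ is strictly increasing, which gives a contradiction. Hence $u_1\le u_2+\delta$; let $\delta\to0$ and swap roles. The paper does the same thing routed through the approximants, proving $v\le u_c$ and $u_c\le v+c$.

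\textbf{The blow-up step at the boundary also does not work as stated.} The squeeze between the tangent-ball barriers and $\operatorname{dist}(\cdot,\partial\Omega)$ does force the rescalings to converge locally uniformly to the linear function $x_n$ on a half-space. But that limit is a null hyperplane, not a strictly spacelike solution. Moreover, $C^0$ convergence says nothing about $|\mathrm{D}u|$ at a given interior point. No $C^1$ compactness is available either: uniform bounds $|\mathrm{D}u_k|\le 1-\eta$ on compact sets would pass to the limit and make $x_n$ $(1-\eta)$-Lipschitz. What the paper establishes is only what the squeeze gives directly, namely that the normal derivative at each boundary point equals $1$. You should drop the blow-up rather than rely on it.
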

The first step is to show that
\begin{lemma}\label{lm:step:one}
Let $\Omega$ be given as in Theorem \ref{main:thm:A1}, and let $c>0$ be a constant. Then there exists a unique strictly spacelike solution $u\in C^\infty(\Omega)\cap C(\overline{\Omega})$ to the Dirichlet problem
\begin{equation}\label{delta}
\begin{cases}
\mathrm{div}\!\left(\dfrac{\mathrm{D}u}{\sqrt{1-|\mathrm{D}u|^2}}\right)+\dfrac{1}{u}\dfrac{m}{\sqrt{1-|\mathrm{D}u|^2}}=0, & \text{in } \Omega,\\[1.2ex]
u=c, & \text{on } \partial\Omega.
\end{cases}
\end{equation}
\end{lemma}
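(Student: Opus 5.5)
Our plan is to view \eqref{delta} as a prescribed mean curvature problem $\mathrm{div}(\rD u/\sqrt{1-|\rD u|^2})+F(u,\rD u)=0$ with $F=\frac{m}{u}\frac{1}{\sqrt{1-|\rD u|^2}}$. The only obstruction to quoting Gerhardt \cite[Theorem 5.1]{gerhardt1983h} or Bartnik \cite{Bartnik89} directly is the singularity of $1/u$ at $u=0$ and the lack of an a priori upper bound. We remove both with barriers, truncate the nonlinearity, and then apply the known existence theory.

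\textbf{Step 1: a lower barrier.} Every solution is a supersolution of the maximal surface equation, since $\mathrm{div}(\rD u/\sqrt{1-|\rD u|^2})=-\frac{m}{u}v<0$. The comparison principle for the operator $\mathrm{div}(\rD u/\sqrt{1-|\rD u|^2})$ therefore gives $u\ge c$ in $\Omega$ whenever $u>0$. More precisely, if $u$ attained an interior minimum below $c$, then at that point $\rD u=0$ and $\mathrm{div}(\rD u\, v)=\Delta u\ge 0$, contradicting the negative sign. So $u\ge c$, and the singular term stays bounded.

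\textbf{Step 2: an upper barrier.} We use the radial solution of Theorem \ref{thm:main:dke}, or simply the Lipschitz bound. Any spacelike function with boundary value $c$ satisfies $u\le c+\mathrm{diam}_\sigma(\Omega)$, because $|\rD u|<1$ and $u=c$ on $\partial\Omega$. Combined with Step 1, this gives $c\le u\le c+d$.

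\textbf{Step 3: truncation and existence.} Replace $1/u$ by a smooth bounded function $h(u)$ that equals $1/u$ on $[c/2,\infty)$ and is positive and bounded everywhere. For the truncated equation, $F(x,u,\rD u)=m\,h(u)\,v$ has exactly the structure covered by Gerhardt \cite[Theorem 5.1]{gerhardt1983h}. In the language of the paper, it is the mean curvature equation in a warped product with $\phi'/\phi=h$ up to the dimensional constant. The boundary data are constant, hence acausal, and $\partial\Omega$ is smooth, so a strictly spacelike solution $u\in C^\infty(\Omega)\cap C(\overline\Omega)$ exists. Applying Step 1 to the truncated equation, whose right-hand side still has a sign, shows $u\ge c$. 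The truncation is therefore inactive, and $u$ solves \eqref{delta}.

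The main obstacle is checking the hypotheses of the cited existence theorem. In particular, strict spacelikeness up to the boundary with constant data requires boundary gradient barriers. We would build these from the distance function to $\partial\Omega$ near the boundary, of the form $c\pm(\,\text{small multiple of } d)$, using that $F$ is bounded by $\frac{m}{c}v$. If needed, we would instead invoke the interior estimate of Bartnik \cite{Bartnik89}: interior strict spacelikeness follows from the acausal boundary data, and continuity up to $\overline\Omega$ follows from the barriers.

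\textbf{Step 4: uniqueness.} Since $t\mapsto 1/t$ is decreasing, we argue by comparison. Suppose $u_1,u_2$ are two solutions and $u_1-u_2$ has a positive maximum at an interior point. The difference of the quasilinear operators is elliptic, and the zero-order term $\frac{m}{u}v$ is decreasing in $u$ at fixed gradient, which gives the correct sign for the comparison principle. Linearising along $u_t=tu_1+(1-t)u_2$ yields a linear elliptic equation for $w=u_1-u_2$ whose zero-order coefficient has the favorable sign, so the maximum principle forces $w\le 0$. By symmetry, $u_1=u_2$.
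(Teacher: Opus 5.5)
Your proposal is correct and follows essentially the same route as the paper: truncate $1/u$ by a smooth, bounded, positive $h$, invoke Gerhardt's Theorem 5.1 for the truncated problem, use the strict negativity of the right-hand side and the maximum principle to get $u\ge c$ (so the truncation is inactive), and obtain uniqueness by comparison using that $1/u$ is decreasing. Your Step 2 upper bound and the discussion of boundary gradient barriers are not needed, since the cited existence theorem already supplies a strictly spacelike solution in $C^\infty(\Omega)\cap C(\overline{\Omega})$, and the lemma only asserts strict spacelikeness in the interior.
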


\begin{proof}
Define a smooth positive function $h$ on $\mathbb{R}$ by
\begin{equation}\label{def:h}
h(r)=
\begin{cases}
\dfrac{1}{r}, & r\in\left(\dfrac{c}{2},\, \infty\right),\\[1.2ex]
\dfrac{4}{c}, & r\in\left(-\infty,\, \dfrac{c}{4}\right),\\[1.2ex]
\end{cases}
\end{equation}
Consider the Dirichlet problem
\begin{equation}\label{det}
\begin{cases}
\mathrm{div}\!\left(\dfrac{\mathrm{D}u}{\sqrt{1-|\mathrm{D}u|^2}}\right)+m h(u)\dfrac{1}{\sqrt{1-|\mathrm{D}u|^2}}=0, & \text{in } \Omega,\\[1.2ex]
u=c, & \text{on } \partial\Omega.
\end{cases}
\end{equation}
Since $c$ is constant and $h$ is smooth on $\mathbb{R}$, Theorem 5.1 in \cite{gerhardt1983h} yields a strictly spacelike solution $u\in C^\infty(\Omega)\cap C(\overline{\Omega})$ to \eqref{det}. Since $h>0$, we have
\[
\mathrm{div}\!\left(\frac{\mathrm{D}u}{\sqrt{1-|\mathrm{D}u|^2}}\right)\le 0,
\]
so $u\ge c$ in $\Omega$ by the maximum principle.  By the definition of $h$, it follows that $h(u)=1/u$ on $\Omega$. Hence $u$ is the desired solution to \eqref{delta}. Uniqueness follows from the maximum principle with the fact $(-\frac{1}{u})'\geq 0$. This completes the proof.
\end{proof}

Let $u_c\in C^\infty(\Omega)\cap C(\overline{\Omega})$ denote the solution to \eqref{delta} with $|\mathrm{D}u_c|<1$ in $\Omega$ and $u_c=c$ on $\partial\Omega$. Since $(-1/u)'\ge 0$, the maximum principle implies that $u_c\ge u_{c'}$ in $\Omega$ whenever $c\ge c'>0$. Now fix a positive decreasing sequence $\{c_i\}_{i=1}^\infty$ converging to zero. The monotone sequence $\{u_{c_i}\}_{i=1}^\infty$ converges to a nonnegative function $u\in C^1(\Omega)\cap C(\overline{\Omega})$ with $u=0$ on $\partial\Omega$. \\
\indent The remaining task is to show that $u$ is indeed the desired solution of Theorem \ref{main:thm:A1}. 

\begin{lemma}\label{lm:co:bound}
We have $u(x)>0$ for every $x\in\Omega$ and $|\mathrm{D}u|=1$ on $\partial\Omega$.
\end{lemma}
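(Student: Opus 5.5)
The plan is to compare the approximating solutions $u_{c_i}$ from below with rescaled radial solutions given by Theorem \ref{thm:main:dke}. This yields a positive lower bound at every interior point. The boundary gradient statement then comes from squeezing $u$ between a radial subsolution and zero near each boundary point.

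\textbf{Step 1: positivity.} Fix $x_0\in\Omega$. If $N$ is not Euclidean, we work in a small normal coordinate ball. Choose $R>0$ small enough that the geodesic ball $B_R(x_0)\subset\subset\Omega$. Let $w(x)=w(|x-x_0|)$ be the radial solution of Theorem \ref{thm:main:dke} with $\alpha=m$ on $B_R(x_0)$. Thus $w>0$ in $B_R(x_0)$ and $w=0$ on $\partial B_R(x_0)$.

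On $\partial B_R(x_0)$ we have $u_{c_i}\ge c_i>0=w$; indeed $u_{c_i}\ge c_i$ in $\Omega$ by the maximum principle argument of Lemma \ref{lm:step:one}. The operator
\[
Q(u)=\mathrm{div}\!\left(\frac{\rD u}{\sqrt{1-|\rD u|^2}}\right)+\frac{m}{u\sqrt{1-|\rD u|^2}}
\]
has a zeroth-order term that is decreasing in $u$. Hence the comparison principle used for uniqueness in Lemma \ref{lm:step:one} applies on $B_R(x_0)$ and gives $u_{c_i}\ge w$ there. Letting $i\to\infty$ gives $u(x_0)\ge w(0)>0$.

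On a curved $N$ the radial function is not an exact solution, because the divergence in geodesic polar coordinates contains $\Delta r$ instead of $(n-1)/r$. To handle this I would use a subsolution instead. By Step Two, $w_\lambda(r)=\lambda w(r/\lambda)$ solves the model equation with $\alpha$ replaced by any chosen constant. I would take the radial solution with a slightly larger $\alpha'>m$, or with dimension parameter $n-1$ replaced by a larger value. Since $w'\le0$, this absorbs the error $(\Delta r-(n-1)/r)w'v$, which is $O(r)|w'|v$ and is small for small $R$. The result is $Q(w)\ge0$, and the comparison still applies. This adjustment is technical but routine.

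\textbf{Step 2: $|\rD u|=1$ on $\partial\Omega$.} Since each $u_{c_i}$ is spacelike, $u$ is $1$-Lipschitz, so $|\rD u|\le1$ wherever it makes sense. For the reverse inequality, fix $y\in\partial\Omega$. By smoothness, $\Omega$ satisfies an interior sphere condition at $y$, so there is a ball $B_R(z)\subset\Omega$ with $y\in\partial B_R(z)$.

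Applying the comparison of Step 1 on $B_R(z)$ gives $u(x)\ge w(|x-z|)$. Along the segment from $z$ to $y$ we have $u(y)=0=w(R)$. By Theorem \ref{thm:main:dke}(2), $w'(r)\to-1$ as $r\to R^-$. Hence the inward normal difference quotient satisfies $u(y-t\nu)/t\ge w(R-t)/t\to1$.

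Combined with the Lipschitz bound, the normal derivative of $u$ at $y$ has absolute value $1$. Therefore $|\rD u|\ge1$ at $y$, in the sense of one-sided derivatives or of $\limsup_{x\to y}|\rD u(x)|$. Together with the upper bound this gives $|\rD u|=1$ on $\partial\Omega$.

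\textbf{Main obstacle.} The main difficulty is the comparison principle for this singular, degenerate operator. Near $\partial B_R$ we have $w\to0$ and $|\rD w|\to1$, so $w$ is only spacelike and not strictly spacelike up to the boundary. I would apply comparison on $B_{R-\epsilon}$ using the shifted function $w-\eta$. Here $w-\eta<u_{c_i}$ near $\partial B_{R-\epsilon}$, and $w-\eta$ is a strict subsolution where it is positive, since $1/(w-\eta)>1/w$. Then let $\epsilon,\eta\to0$. The curved-ambient correction described at the end of Step 1 is the secondary issue.
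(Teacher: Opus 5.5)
Your overall scheme is the paper's. A rescaled radial solution from Theorem \ref{thm:main:dke} is a lower barrier for every $u_{c_i}$ on a small geodesic ball, and you let $c_i\to0$. At $y\in\partial\Omega$ you use a small interior tangent ball, $w'(R^-)=-1$, and the $1$-Lipschitz bound. Your difference-quotient formulation is a more precise version of the paper's contradiction argument along the geodesic.

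The two routes differ in how the curvature error is absorbed on a curved $N$, and there your first option has the wrong sign. If $w$ is the radial solution with parameter $\alpha'$ in dimension $n$, then
\[
Q(w)=\Bigl(\Delta r-\frac{n-1}{r}\Bigr)\frac{w'}{\sqrt{1-w'^2}}+\frac{m-\alpha'}{w\sqrt{1-w'^2}}.
\]
So $\alpha'>m$ makes the last term negative, and $w$ is no longer a subsolution; you need $\alpha'<m$. That is exactly the paper's choice. The paper then uses the scaling $\lambda w(\cdot/\lambda)$ with $\lambda$ small to make $w\,r\,|w'|\le\lambda^2u_0R$ small, so that the gain $(m-\alpha')/w$ dominates the $O(r)$ error.

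Your second option is correct and genuinely different: keep $\alpha=m$ and use the radial solution in dimension $n'>n$ (e.g.\ $n'=n+1$, which Theorem \ref{thm:main:dke} covers). It gives $Q(w)=\bigl(\Delta r-\frac{n'-1}{r}\bigr)\frac{w'}{\sqrt{1-w'^2}}$. Since $\Delta r-\frac{n'-1}{r}\le Cr-\frac{n'-n}{r}<0$ for small $r$ and $w'<0$, you get $Q(w)>0$ on the punctured ball. At the center, $Q(w)(x_0)=\frac{m}{w(0)}\bigl(1-\frac{n}{n'}\bigr)>0$. This replaces the paper's balancing of the $1/w$ gain against the error with a pointwise sign comparison of terms sharing the factor $|w'|/\sqrt{1-w'^2}$. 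It needs only that the radius be small relative to the curvature near $x_0$, with no separate scaling bookkeeping.

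Your ``main obstacle'' is milder than you think. Since $u_{c_i}\ge c_i$ and $w\to0$ at $\partial B_R$, the set $\{w>u_{c_i}\}$ lies in the closed ball $\{w\ge c_i\}\subset\subset B_R$, where $w$ is $C^2$ and strictly spacelike. The ordinary comparison principle therefore applies on a slightly smaller ball without any $\eta$-shift, which is what the paper implicitly uses. Your shift does no harm.
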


\begin{proof}
Fix $x_0\in\Omega$. Let $R$ be a fixed positive constant less than one. 

Let $n$ denote the dimension of $\Omega$.  Fix any positive constant $\alpha<m$. By Theorem \ref{thm:main:dke}, there exists a pair of smooth functions $(u(r),v(r))$ on $[0,R]$ satisfying
\begin{equation}\label{eq:middle:step}
\begin{cases}
u'(r)=\dfrac{v(r)}{\sqrt{1+v(r)^2}},\\[8pt]
v'(r)=-\dfrac{n-1}{r}v(r)-\dfrac{\alpha}{u(r)}\sqrt{1+v(r)^2},
\end{cases}
\end{equation}
with $u(0)=u_0>0$, $|u'(r)|<1$ on $(0,R)$, and $\lim_{r\to R^-}|u'(r)|=1$. Set $r(x)=\operatorname{dist}(x,x_0)$ on $B_R(x_0)$. Define
\[
h_{\lambda R}(x)=u_\lambda(r(x))=\lambda u\!\left(\frac{r(x)}{\lambda}\right),
\]
where $\lambda>0$ is a constant determined later. Since $u'(0)=0$, $h_{\lambda R}$ is a positive, strictly spacelike $C^2$ function on $B_{\lambda R}(x_0)$ with $h_{\lambda R}=0$ on $\partial B_{\lambda R}(x_0)$. From \eqref{eq:middle:step} and the scaling property in Step Two in the proof of Theorem \ref{thm:main:dke}, it follows that on $B_{\lambda R}(x_0)$,
\begin{equation}\label{eq:key}
\begin{aligned}
\mathrm{div}\!\left(\frac{\mathrm{D}h_{\lambda R}}{\sqrt{1-|\mathrm{D}h_{\lambda R}|^2}}\right)
&=\mathrm{div}\bigl(v(r(x))\,\mathrm{D}r(x)\bigr)\\
&=v_\lambda'(r(x))+v_\lambda(r(x))\,\Delta r(x)\\
&=\left(\Delta r(x)-\frac{n-1}{r(x)}\right)v_\lambda(r(x))
-\frac{\alpha}{h_{\lambda R}(x)}\sqrt{1+v_\lambda(r(x))^2}\\
&=-\frac{1}{h_{\lambda R}(x)}\frac{1}{\sqrt{1-|\mathrm{D}h_{\lambda R}|^2}}
\left(\alpha+h_{\lambda R}(x)|\mathrm{D}h_{\lambda R}(x)|\,O(r(x))\right),
\end{aligned}
\end{equation}
where $\mathrm{div}$ and $\Delta$ denote the divergence and Laplacian on $\Omega$, respectively. Here we used the standard expansion
\[
\Delta r=\frac{n-1}{r}+\frac{1}{3}\mathrm{Ric}(\mathrm{D}r,\mathrm{D}r)+O(r^2)
\]
in the Riemannian setting (see \cite{S-Yau}). By Theorem 3.1, $u(r)$ is decreasing, so $h_{\lambda R}(x)\le u_\lambda(0)=\lambda u_0$. Hence
\[
h_{\lambda R}(x)|\mathrm{D}h_{\lambda R}(x)|\,O(r(x))\le \lambda K u_0,
\]
where $K>0$ depends only on the Ricci curvature and the diameter of $\Omega$. Thus there exists $\lambda_0>0$ such that for any $\lambda\in(0,\lambda_0)$, $\lambda K u_0<m-\alpha$. From \eqref{eq:key}, for every $\lambda\in(0,\lambda_0)$ and every $x\in B_{\lambda R}(x_0)$,
\begin{equation}\label{det:A}
\mathrm{div}\!\left(\frac{\mathrm{D}h_{\lambda R}}{\sqrt{1-|\mathrm{D}h_{\lambda R}|^2}}\right)
>-\frac{m}{h_{\lambda R}}\frac{1}{\sqrt{1-|\mathrm{D}h_{\lambda R}|^2}}.
\end{equation}
Moreover, $h_{\lambda R}=0$ on $\partial B_{\lambda R}(x_0)$.

Now choose $\lambda$ so small that $B_{\lambda R}(x_0)\subset\subset\Omega$. For each $c>0$, we have $u_c(x)\ge c>0=h_{\lambda R}(x)$ on $\partial B_{\lambda R}(x_0)$. By the maximum principle, $u_c(x)>h_{\lambda R}(x)$ for every $c>0$. Hence $u(x)\ge h_{\lambda R}(x)>0$ for all $x\in B_{\lambda R}(x_0)$. Since $x_0\in\Omega$ was arbitrary, we obtain $u(x)>0$ for every $x\in\Omega$.

It remains to prove the boundary claim. Fix $y_0\in\partial\Omega$. Note that we may choose $\lambda_0$ uniformly so that the inequality holds for any $x_0\in\overline{\Omega}$ and any $R\in(0,1)$. Fix such a $\lambda_0$. Then there exists $\lambda_1>0$ with the following property: for any $\lambda<\lambda_1<\lambda_0$ and any $R\in(0,1)$, there is some $x_0\in\Omega$ such that the embedded ball $B_{\lambda R}(x_0)$ is tangent to $\partial\Omega$ at $y_0$. The function $h_{\lambda R}$ then satisfies \eqref{det:A} with $h_{\lambda R}=0$ on $\partial B_{\lambda R}(x_0)$. Therefore $u\ge h_{\lambda R}>0$ in $B_{\lambda R}(x_0)$ and $u(y_0)=h_{\lambda R}(y_0)=0$.

We argue by contradiction. Suppose $|\mathrm{D}u|(y_0)<1-c$ for some fixed $c>0$. Since $\lim_{y\to y_0}|\mathrm{D}h_{\lambda R}|=1$, we work along the geodesic $\gamma$ starting from $x_0$ pointing into $\Omega$. For any $x\in\gamma$ sufficiently close to $y_0$,
\[
u(x)=u(x)-u(y_0)=\bigl(|\mathrm{D}u|(y_0)+\varepsilon\bigr)\,d(x,y_0)
< \left(1-\frac{c}{2}\right)d(x,y_0)\le h_{\lambda R}(x),
\]
where $\varepsilon>0$ is sufficiently small. This contradicts $u(x)>h_{\lambda R}(x)$ on $B_{\lambda R}(x_0)$. Hence $|\mathrm{D}u|(y_0)=1$ for every $y_0\in\partial\Omega$.
\end{proof}
At this stage, we only know that $u(x)$ is $C^1$, positive and $|\mathrm{D}u|\le 1$ on $\Omega$, as the limit of $\{u_{c_i}\}_{i=1}^\infty$. We will show
the  no-light-segment result in Lemma \ref{det:LA} below that $u$ is in fact strictly spacelike, i.e. $|\mathrm{D}u|<1$ in $\Omega$. This will complete the existence part of Theorem \ref{main:thm:A1}.

\begin{lemma}\label{det:LA}
Fix $x_0\in\Omega$ and an embedded ball $B_R(x_0)\subset\subset\Omega$. Then there exists a positive constant $\mu$, depending only on $\operatorname{diam}(\Omega)$, $c_0$, $\mathrm{Ric}$, $n$,
$m$, and $x_0$, such that
\[
\sup_{B_R(x_0)} v\le \mu,
\]
where $v=1/\sqrt{1-|\mathrm{D}u|^2}$.
\end{lemma}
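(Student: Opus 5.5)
We prove the bound for each approximating solution $u_c$ with $c\le c_0$ (here $c_0$ is a fixed upper bound for the boundary values $c_i$), with $\mu$ independent of $c$. The estimate then passes to the $C^1$ limit $u$, because $|\mathrm{D}u_{c_i}|\to|\mathrm{D}u|$ locally uniformly along a subsequence. The method is a Bartnik-type interior gradient estimate \cite[Theorem 3.1]{Bartnik89}, obtained from the maximum principle applied on the graph $\Sigma$ of $u_c$ to an auxiliary function of the form
\[
Q=\eta\, v,\qquad \eta=\bigl(\tau(x)\bigr)^{+},
\]
where $\tau$ is a spacelike cutoff in spacetime. Two facts make such a cutoff possible. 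First, $u\ge u_{c}$ for every $c$, and by Lemma \ref{lm:co:bound} together with the comparison function $h_{\lambda R}$ there is a positive lower bound $u_c\ge\delta_0>0$ on a slightly larger ball $B_{R'}(x_0)\subset\subset\Omega$, with $\delta_0$ depending on $x_0$. Second, $u_c\le C(\operatorname{diam}\Omega,c_0)$ by the Lipschitz bound $|\mathrm{D}u_c|<1$. So on $B_{R'}(x_0)$ the singular coefficient $1/u_c$ is bounded above and below, and the equation behaves like a prescribed mean curvature equation $\mathrm{H}=-m/(u\,\cdot)$ with bounded data.

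The cutoff will be built from a time function adapted to the point: take $\eta=\bigl(u_c(x)-\psi(x)\bigr)^{+}$, where $\psi$ is a strictly spacelike function with $\psi<u_c$ near $x_0$ and $\psi\ge u_c$ on $\partial B_{R'}(x_0)$. A natural choice is $\psi=u_c(x_0)-\theta+\kappa\, r(x)^2$. Its support is a compact region inside the ball, since $u_c-\psi$ is negative once $\kappa r^2$ exceeds the oscillation of $u_c$. Let $\Delta$ denote the Laplacian of $\Sigma$. The main computation evaluates $\Delta Q$ at an interior maximum, using Lemma \ref{lm:laplacian}:
\[
\Delta v=|A|^2v-\langle\nabla \mathrm{H},\partial_r\rangle+v^3\mathrm{Ric}(\mathrm{D}u,\mathrm{D}u).
\]
Here $\mathrm{H}=-\frac{m}{u}v$, so $\nabla\mathrm{H}$ contains the terms $\frac{m}{u^2}v\nabla u$ and $-\frac{m}{u}\nabla v$. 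The first is controlled using $|\nabla u|^2=v^2-1$ and the lower bound $\delta_0$. The second is absorbed at the maximum point, where $\nabla v/v=-\nabla\eta/\eta$.

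The term $\Delta\eta$ needs separate treatment. On $\Sigma$ we have $\Delta u=\mathrm{H}\,v$ up to sign, and $\Delta\psi$ is computed via $g^{ij}$, whose largest part is $\frac{u^iu^j}{1-|\mathrm{D}u|^2}\sim v^2$. Because $\psi$ is strictly spacelike and $u$ is almost null where $v$ is large, we get $-g^{ij}\psi_{ij}$-type terms and $|\nabla\eta|^2\gtrsim v^2$, which produce a good negative term of order $-\kappa v^2$ that dominates. The term $|A|^2 v$ should be handled either by Kato's inequality $|A|^2\ge \frac{n+1}{n}\frac{|\nabla v|^2}{v^2}\cdot(\dots)$ or simply dropped after the standard trick of maximizing $\log Q$. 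Assembling the maximum-principle inequality $0\ge\Delta\log Q$ gives $\eta v\le C$ at the maximum point, hence $v(x_0')\le C/\eta$ on a smaller ball. Covering $B_R(x_0)$ then yields the bound $\mu$.

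The hardest step will be the sign bookkeeping in $\Delta\eta$ and $\langle\nabla\mathrm{H},\partial_r\rangle$. The Ricci term $v^3\mathrm{Ric}(\mathrm{D}u,\mathrm{D}u)$ grows like $v^3$, which is cubic and could beat the $\kappa v^2$ gain. To handle it I will first try the exponential weight $\eta=e^{K\,(u-\psi)}-1$ with $K$ large, following Bartnik. This produces a term of size $K^2\eta'|\nabla(u-\psi)|^2\sim K^2v^2\cdot v$ with a favorable sign, which can absorb the curvature term with constants depending on $\mathrm{Ric}$, $\operatorname{diam}\Omega$, $m$, $n$, $\delta_0(x_0)$ and $c_0$, as claimed.
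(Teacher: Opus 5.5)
There is a genuine gap, and it is in the localization, which is the heart of the matter. Your cutoff $\eta=(u_c-\psi)^+$ needs a strictly spacelike $\psi$ with $\psi(x_0)<u_c(x_0)$ and $\psi\ge u_c$ on $\partial B_{R'}(x_0)$. Such a $\psi$ need not exist. Along a unit-speed geodesic from $x_0$ to $y\in\partial B_{R'}(x_0)$ you have $\psi(y)<\psi(x_0)+R'<u_c(x_0)+R'$, while $|\mathrm{D}u_c|<1$ still allows $u_c(y)$ to be arbitrarily close to $u_c(x_0)+R'$. That near-null configuration (an almost light segment through $x_0$) is exactly what the gradient bound has to exclude, so postulating the barrier is circular. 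Your explicit choice shows the same thing. $\psi=u_c(x_0)-\theta+\kappa r^2$ is spacelike on $B_{R'}(x_0)$ only if $2\kappa R'<1$, and then $\kappa R'^2<R'/2$ cannot exceed the possible oscillation of $u_c$. If instead $\kappa R'>1$, then $\psi$ is timelike on the outer part of the ball, and $|\nabla\eta|^2\gtrsim v^2$ fails there.

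The paper localizes in time rather than in space, using the height itself as Bartnik's time function. It works on $\Sigma_{c_0}$, the part of the graph of $\tilde u=u_{c_i}$ over $\Omega\times[c_0,\infty)$. This set is compact because $u_{c_i}=c_i<c_0$ on $\partial\Omega$; in your language, $\psi\equiv c_0$ on all of $\Omega$, so the Dirichlet data does the work instead of a local barrier. The bound then reaches $B_R(x_0)$ because $u_{c_i}\ge u\ge 2c_0$ there. Note the direction: the approximants decrease to $u$, so $u_c\ge u$, not $u\ge u_c$.

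The computation at the maximum point also needs a different structure. At an interior maximum of $\log Q$ you need favorable positive contributions to $\Delta\log Q$. A convex $\psi$ contributes $-\Delta\psi/\eta$, whose leading part $-2\kappa v^2/\eta$ has the wrong sign, and a concave $\psi$ could never lie above $u_c$ on the sphere. The only good term comes from $|A|^2$, so it cannot be dropped. With $w=\operatorname{arcosh}v$, the paper uses $|\nabla w|\le|\lambda_1|$ and $|A|^2\ge(1+\frac1n)\lambda_1^2-\mathrm{H}^2$ to get $\Delta w\ge\frac{v}{\sqrt{v^2-1}}\bigl(\gamma|\nabla w|^2-Cv^2\bigr)$ with $\gamma>0$, and then maximizes $w+f(\tilde u)$. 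At the maximum the coefficient of $|\nabla r|^2$ is $\gamma f'^2+f''$, so near the edge of its support the weight must behave like $p\log(r-c_0)$ with $p>1/\gamma$. Since $\gamma<1$, neither of your weights closes: $Q=\eta v$ (that is, $p=1$) gives $\gamma-1<0$, and $f=\log(e^{Ks}-1)$ gives $\frac{K^2e^{Ks}}{(e^{Ks}-1)^2}(\gamma e^{Ks}-1)<0$ near $s=0$.

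A caveat on the paper's constant: carrying out its computation gives $\gamma=\frac1n-\frac{n+1}{n}\varepsilon_1$, not the stated $\frac{1}{n-1}$. So $\varepsilon_1$ should be taken small and the weight $n\log(r-c_0)$ replaced by $p\log(r-c_0)$ with $p>n$; nothing else changes. The paper also makes $f$ linear above $r_0$, so that $(1+f')/(\gamma f'^2+f'')$ is small for $r_0$ close to $c_0$.

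Finally, the Ricci term is not a cubic-versus-quadratic obstruction. After dividing by $v$ it is $O(v^2)$, the same order as the $\mathrm{H}^2$, $f'\mathrm{H}v$ and $\langle\nabla\mathrm{H},\partial_r\rangle$ contributions, which are controlled using $\tilde u\ge c_0$ on $\Sigma_{c_0}$. All of these are absorbed by $\gamma f'^2|\nabla r|^2$.
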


\begin{rem}
The proof follows the idea in \cite[Theorem 3.1]{Bartnik89}. We include it here for the convenience of the reader.
\end{rem}

\begin{proof}
Fix $x_0\in\Omega$ and an embedded ball $B_R(x_0)\subset\subset\Omega$. By Lemma \ref{lm:co:bound} and continuity of $u$, there exists $c_0>0$ such that $u\ge 2c_0$ on $B_R(x_0)$. Since $u$ is the limit of $\{u_{c_i}\}_{i=1}^\infty$, we may assume without loss of generality that $c_i<c_0/2$ for all $i$.

In what follows, let $\tilde u$ denote any $u_{c_i}$ for some $i$. Define an auxiliary function $f_{c_0}$ by
\[
f_{c_0}(r)=
\begin{cases}
n\log(r-c_0), & c_0<r\le r_0,\\[1.2ex]
n\left(\dfrac{r}{r_0-c_0}+\log (r_0-c_0)-\frac{r_0}{r_0-c_0}\right), & r\in(r_0,\infty).
\end{cases}
\]
The function $f_{c_0}$ is piecewise smooth; after mollification we still denote it by $f=f_{c_0}$, and it satisfies
\begin{equation}\label{key:estimates}
\frac{(1+f')}{\frac{1}{n-1}|f'|^2+f''(r)}
\le C\bigl((r_0-c_0)+(r_0-c_0)^2\bigr),
\end{equation}
where $C$ depends only on $n$.

We now apply the maximum principle as in the proof of \cite[Theorem 3.1]{Bartnik89}. Define
\[
\phi(v,r)=\operatorname{arcosh} v+f_{c_0}(r).
\]
Let $\Sigma_{c_0}$ denote the part of the graph of $\tilde u$ lying over $\Omega\times[c_0,\infty)$. By the choice of $c_0$, $\Sigma_{c_0}$ is nonempty.

Since $f(r)\to-\infty$ as $r\to c_0^+$, the function $\phi$ attains its maximum at some interior point $p\in\Sigma_{c_0}$. Below, $\Delta$ and $\nabla$ denote the Laplacian and covariant derivative on $\Sigma_{c_0}$. Set $\psi=\operatorname{arcosh} v$. At the point $p$, we have
\begin{equation}\label{key:equation}
\nabla\psi(p)+f'\nabla r(p)=0,\qquad
\Delta\psi(p)+f''(r)|\nabla r|^2+f'(r)\Delta r\le 0.
\end{equation}

From \eqref{delta}, the mean curvature of the graph of $\tilde u$ satisfies
\[
\mathrm{H}=\frac{1}{\tilde{u}}v,\qquad v=\frac{1}{\sqrt{1-|\mathrm{D}\tilde u|^2}}.
\]
where $h$ is given by \eqref{def:h}. 
By Lemma \ref{lm:laplacian},
\begin{equation}\label{eq:delta:rv}
\begin{aligned}
\Delta v
&\ge |A|^2v-\langle\nabla \mathrm{H},\partial_r\rangle+v^3\,\mathrm{Ric}(\mathrm{D}\tilde u,\mathrm{D}\tilde u)\\
&\ge |A|^2v+\frac{1}{\tilde{u}^2}v(v^2-1)-\frac{1}{\tilde{u}}\langle\nabla v,\partial_r\rangle-Cv^3\\
&\ge (1-\varepsilon_1)|A|^2v-C(\varepsilon_1,\mathrm{Ric},c_0)v^3,
\end{aligned}
\end{equation}
where $\varepsilon_1>0$ is a small constant to be determined later. Here we used $\langle \nabla r,\nabla r\rangle=v^2-1$ and the estimates
\begin{equation}\label{fact:ABC}
\begin{aligned}
|\nabla v|^2(p)&=\sum_{i=1}^n \langle \lambda_i e_i,\partial_r\rangle^2
\le \lambda_1^2(v^2-1),\\
|\nabla v|&\le \lambda_1 v\le \varepsilon_1 |A|^2v+\frac{1}{4\varepsilon_1}v^3,
\end{aligned}
\end{equation}
where $\lambda_i$ ($i=1,\dots,n$) are the eigenvalues of the second fundamental form with respect to an orthonormal basis $\{e_i\}$ at $T_p\Sigma$, i.e. $\bar{\nabla}_{e_i}\vec n(p)=\lambda_i e_i(p)$, and we take $|\lambda_1|=\max_i|\lambda_i|$.

Since $v=\cosh\psi$ and from \eqref{fact:ABC},
\[
\Delta v=\cosh\psi|\nabla\psi|^2+\sinh\psi\,\Delta\psi,\qquad
\frac{|\nabla v|}{\sqrt{v^2-1}}=|\nabla\psi|.
\]
Moreover, from \eqref{fact:ABC}, we have $|\lambda_1|\ge|\nabla\psi|$. Combining this with \eqref{eq:delta:rv} yields
\[
\Delta\psi\ge \frac{\cosh\psi}{\sinh\psi}\left((1-\varepsilon_1)|A|^2-|\nabla\psi|^2-C(\varepsilon_1,\mathrm{Ric},c_0)v^2\right).
\]
The mean curvature is $\mathrm{H}=\sum_i\lambda_i$. By the Schwarz inequality,
\[
\begin{aligned}
|A|^2&=\lambda_1^2+\sum_{i=2}^n\lambda_i^2
\ge \lambda_1^2+\frac{1}{n-1}(\mathrm{H}-\lambda_1)^2\\
&\ge \left(1+\frac{1}{n}\right)\lambda_1^2-\mathrm{H}^2
\ge \left(1+\frac{1}{n}\right)|\nabla\psi|^2-C(c_0)v^2.
\end{aligned}
\]
Choosing $\varepsilon_1=1/(n^2-1)$, we obtain
\[
\Delta\psi\ge \frac{\cosh\psi}{\sinh\psi}\left(\frac{1}{n-1}|\nabla\psi|^2-C(n,\mathrm{Ric},c_0)v^2\right).
\]
Recall that $\Delta r=\mathrm{H}v\ge -C(c_0)v^2$. Substituting this into \eqref{key:equation}, at the point $p$ we have
\[
\begin{aligned}
\left(\frac{1}{n-1}|f'(r)|^2+f''(r)\right)|\nabla r|^2
&=\frac{1}{n-1}|\nabla\psi|^2+f''(r)|\nabla r|^2\\
&\le C(n,\mathrm{Ric},c_0)v^2(1+f'(r))\\
&\le C(n,\mathrm{Ric},c_0)(|\nabla r|^2+1)(1+f'(r)).
\end{aligned}
\]
Using \eqref{key:estimates}, we get at $p$,
\[
|\nabla r|^2\le C(n,\mathrm{Ric},c_0)\,C(n)\bigl((r_0-c_0)+(r_0-c_0)^2\bigr)(|\nabla r|^2+1).
\]
Choose $r_0$ sufficiently close to $c_0$ so that
\[
C(n,\mathrm{Ric},c_0)\,C(n)\bigl((r_0-c_0)+(r_0-c_0)^2\bigr)\le \frac12.
\]
Then $|\nabla r|=v^2-1\le C_2$ at $p$, where $C_2$ depends only on $c_0,n,\mathrm{Ric}$.

On the other hand, on $\Sigma_{c_0}$, we have $C_4\leq f_{c_0}(r)\le C_3 $, where $C_3,C_4$ depend only on $\operatorname{diam}(\Omega)$ and $c_0$. Hence on the whole $\Sigma_{c_0}$,
\[
v\le \mu:=\operatorname{cosh}
\{(\operatorname{arcosh}(C_2+1)+C_3-C_4)\}
\]
Thus on $B_R(x_0)$,
\[
v\le \mu.
\]
The above estimate holds for every $\tilde u=u_{c_i}$ and every $i$. By the interior Schauder estimates and the convergence of $\{u_{c_i}\}$, the conclusion follows for $u$ on $B_R(x_0)$.
\end{proof}
We now establish uniqueness. Suppose $v$ is another solution of the Dirichlet problem \eqref{infinity:Dirichlet:problem} on $\Omega$. Since $u_c=c>v$ on $\partial\Omega$, the maximum principle together with $(-1/u)'\ge 0$ implies $u_c\ge v$ for every $c>0$. Letting $c\to 0$, we obtain $u\ge v$.

Conversely, for each $c>0$, set $v_c:=v+c$. From \eqref{infinity:Dirichlet:problem}, we have
\[
\mathrm{div}\!\left(\frac{\mathrm{D}v_c}{\sqrt{1-|\mathrm{D}v_c|^2}}\right)
=-\frac{m}{v_c-c}\frac{1}{\sqrt{1-|\mathrm{D}v_c|^2}}
<-\frac{m}{v_c}\frac{1}{\sqrt{1-|\mathrm{D}v_c|^2}}
\quad\text{in }\Omega.
\]
Since $u_c=v_c$ on $\partial\Omega$, the classical maximum principle yields $v_c\ge u_c$ in $\Omega$, i.e. $v+c\ge u_c$ in $\Omega$. Letting $c\to 0$ gives $v\ge u$ in $\Omega$. Hence $v\equiv u$, proving the uniqueness.
The above discussion completes the proof of Theorem~\ref{main:thm:A1} (part (1) of Theorem~\ref{main:thm:A}).

\section{Area maximizing problems}

 In this section, we study the area maximizing problem in \eqref{area:maximizing} stated as follows: to find 
\[
\sup_{u\in \mathcal{F}_\varphi}\mathcal{A}_{\phi,\Omega}(u),
\]
where $m$ is any positive number, $
\mathcal{A}_{\phi,\Omega}(u)=\int_{\Omega}\phi^m(u)\sqrt{1-|\mathrm{D}u|^2}\,\mathrm{dvol}$ and $
\mathcal{F}_\varphi=\{ u\in \mathrm{Lip}(\overline{\Omega}): |\mathrm{D}u|<1,\ u=\varphi\text{ on }\partial\Omega\}.
$

We first prove Theorem \ref{main:thm:B} (Theorem \ref{main:thm:BA}) for $\phi$ defined on $\mathbb{R}$, and then specialize to the case $\phi(r)=r$ on $(0,\infty)$ to obtain part (2) in Theorem \ref{main:thm:A}.

\subsection{Area maximizing problems in Lorentzian warped products}

For the reader's convenience, we restate Theorem \ref{main:thm:B} as follows.

\begin{theorem}\label{main:thm:BA}
Let $\Omega$ be a smooth bounded domain in an $n$ dimensional Riemannian manifold, and let $\varphi$ be a smooth function satisfying $|\mathrm{D}\varphi|\le \kappa_0$ for some constant $\kappa_0<1$. Suppose that $(\log\phi)''\le 0$ on $\mathbb{R}$. Then there exists a unique spacelike function $u\in C^\infty(\Omega)\cap C(\overline{\Omega})$ solving the area maximizing problem in \eqref{area:maximizing}. Moreover, this function solves the Dirichlet problem in \eqref{general:Dirichlet:problem}.
\end{theorem}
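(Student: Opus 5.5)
First I will produce the candidate maximizer from the PDE, and then prove maximality by a concavity (calibration-type) argument. Since $|\mathrm{D}\varphi|\le\kappa_0<1$ and $\phi$ is smooth and positive, \cite[Theorem 5.1]{gerhardt1983h} gives a strictly spacelike $u\in C^\infty(\Omega)\cap C(\overline\Omega)$ solving \eqref{general:Dirichlet:problem}. This is the same argument as in Lemma \ref{lm:step:one}, with $mh(u)$ replaced by $m\phi'(u)/\phi(u)$; the boundary data are acausal because $|\mathrm{D}\varphi|\le\kappa_0<1$.

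To show that $u$ maximizes $\mathcal{A}_{\phi,\Omega}$ over $\mathcal{F}_\varphi$, I change variables to make the functional concave. Set $G(r)=\int_0^r\phi(s)^m\,ds$. The naive integrand $\phi(w)^m\sqrt{1-|p|^2}$ is not jointly concave in $(w,p)$. I will instead use the first variation directly. For $w\in\mathcal{F}_\varphi$, let $\eta=w-u$, which vanishes on $\partial\Omega$, and consider $g(t)=\mathcal{A}_{\phi,\Omega}(u+t\eta)$ for $t\in[0,1]$. Every $u+t\eta$ is spacelike, because $\{|p|<1\}$ is convex. We have $g'(0)=0$, since $u$ solves the Euler–Lagrange equation \eqref{general:Dirichlet:problem}; the integration by parts is done against the Lipschitz $\eta$, first on interior exhaustions and then passing to the limit using $u\in C(\overline\Omega)$.

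The key step is to show that $g$ is concave. Writing $\psi=\log\phi$, the integrand is $F(w,p)=e^{m\psi(w)}\sqrt{1-|p|^2}$, and its second variation in the direction $(\eta,\mathrm{D}\eta)$ is
\[
e^{m\psi}\Bigl[(m\psi''+m^2\psi'^2)\eta^2 S+2m\psi'\eta\,\partial_p S\cdot \mathrm{D}\eta+\mathrm{D}^2_pS(\mathrm{D}\eta,\mathrm{D}\eta)\Bigr],\qquad S=\sqrt{1-|p|^2}.
\]
Here $\mathrm{D}^2_pS\le -\frac{(p\cdot\xi)^2}{S^3}-\frac{|\xi|^2-(p\cdot \xi)^2}{S}$ and $\partial_pS\cdot\xi=-\frac{p\cdot\xi}{S}$. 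Completing the square in $m\psi'\eta S$ and $\frac{p\cdot\xi}{S}$ leaves
\[
m\psi''\eta^2S-\frac{|\xi|^2-(p\cdot\xi)^2}{S}-\frac{(p\cdot\xi)^2(1-S^2)}{S^3}\cdot(\text{nonneg.}),
\]
which is $\le 0$ because $\psi''\le0$. So $g''\le0$, hence $g(1)\le g(0)$, and $u$ is a maximizer.

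Uniqueness comes from strict concavity. If $g(1)=g(0)$ then $g''\equiv0$, which forces $|\mathrm{D}\eta|^2-(p\cdot \mathrm{D}\eta)^2=0$. Since $|p|<1$, this gives $\mathrm{D}\eta=0$, so $\eta$ is constant, and $\eta=0$ on $\partial\Omega$ gives $w=u$. The same argument shows that any maximizer equals $u$, so the maximizer is smooth and solves \eqref{general:Dirichlet:problem}.

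I expect the main obstacle to be making the completing-the-square estimate rigorous, together with justifying differentiation under the integral when $w$ is only Lipschitz. The difficulty is that $\mathrm{D}u$ may approach $1$ near $\partial\Omega$ only if Gerhardt's solution degenerates, but the gradient bound in \cite{gerhardt1983h} keeps $|\mathrm{D}u|\le\kappa<1$ up to the boundary, so all integrands are bounded. The convex combination then stays uniformly spacelike for $t\in[0,1)$, and dominated convergence handles the rest.
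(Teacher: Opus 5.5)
\textbf{The gap: $g$ is not concave.} The step that fails is the concavity of $g(t)=\mathcal{A}_{\phi,\Omega}(u+t\eta)$. Your completing-the-square has a sign error. With $X=m\psi'\eta$ and $Y=p\cdot\xi$, the part of the second variation you complete is
\[
SX^2-\frac{2XY}{S}-\frac{Y^2}{S^3}=S\Bigl(X-\frac{Y}{S^2}\Bigr)^2-\frac{2Y^2}{S^3}.
\]
So the positive term $m^2\psi'^2\eta^2S$ survives as a nonnegative square instead of being absorbed. Concretely, at a point where $p=0$ and $\xi=0$ but $\eta\neq0$, the integrand equals $e^{m\psi}(m\psi''+m^2\psi'^2)\eta^2$. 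This is positive for $\phi(r)=e^r$, which satisfies the hypothesis since $(\log\phi)''=0$. Because $\eta(x)$ and $\mathrm{D}\eta(x)$ can be prescribed independently at a point, nonpositivity in every direction is exactly joint concavity of $F(w,p)$. You yourself noted that this fails, so the argument contradicts your own remark.

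Integration does not cure the defect either. Take $\phi(r)=e^r$, $\varphi=0$ and $w=\delta\,\mathrm{dist}(\cdot,\partial\Omega)$ with $\delta\to0$. Then $g''(1)\to\int_\Omega(m^2u^2-|\mathrm{D}u|^2)\,\mathrm{dvol}$. This is positive for large $m$: the second term is below $|\Omega|$, while $u$ is positive and nondecreasing in $m$ by comparison. So $g$ is genuinely not concave. The inequality $g(1)\le g(0)$ is true, but not for your reason, and your uniqueness argument, which also rests on $g''\le0$, falls with it. The real obstacle is this, not the justification of differentiating under the integral.

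\textbf{How the paper proceeds.} The paper never uses concavity. The hypothesis $(\log\phi)''\le0$ enters only through the comparison principle. This makes the solutions $u_t$ with boundary data $\varphi+t$ unique and ordered, so their graphs foliate $\overline\Omega\times\mathbb{R}$. The unit normal field $X$ of this foliation is divergence-free for the metric $\tilde\phi^2(\sigma-dr^2)$ with $\tilde\phi^n=\phi^m$. Stokes' theorem on the region between $\operatorname{gr}(v)$ and $\operatorname{gr}(u_0)$, together with $\langle X,N\rangle\le-1$, then gives $\mathcal{A}_{\phi,\Omega}(v)\le\mathcal{A}_{\phi,\Omega}(u_0)$.

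\textbf{How to repair your approach.} The substitution you set up and then discarded is exactly what fixes it. With $w=G(u)$ one has $\phi(u)^m\sqrt{1-|\mathrm{D}u|^2}=\sqrt{f(w)^2-|\mathrm{D}w|^2}$, where $f=\phi^m\circ G^{-1}$ satisfies $f''=m(\log\phi)''/\phi^m\le0$ (evaluated at $G^{-1}(w)$). The map $(a,q)\mapsto\sqrt{a^2-|q|^2}$ is concave and increasing in $a$ on $\{a>|q|\}$. Hence the new integrand is jointly concave in $(w,\mathrm{D}w)$, and the constraint set $\{|\mathrm{D}w|<f(w)\}$ is convex. Interpolate linearly between $G(u)$ and $G(v)$, not between $u$ and $v$; your first-variation argument then goes through. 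For uniqueness, the equality case of the reverse triangle inequality forces $\mathrm{D}G(u)/f(G(u))=\mathrm{D}G(v)/f(G(v))$, i.e.\ $\mathrm{D}u=\mathrm{D}v$. This route avoids constructing the foliation, but it still needs Gerhardt's existence result and the boundary justification of the first variation.
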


\begin{proof}
Since $\phi$ is smooth on $\mathbb{R}$, by \cite[Theorem 5.1]{gerhardt1983h} there exists a strictly spacelike solution $u_0\in C^\infty(\Omega)\cap C(\overline{\Omega})$ to the Dirichlet problem
\begin{equation}\label{general:Dirichlet:problem:B}
\begin{cases}
\mathrm{div}\!\left(\dfrac{\mathrm{D}u}{\sqrt{1-|\mathrm{D}u|^2}}\right)
+\dfrac{\phi'(u)}{\phi(u)}\dfrac{m}{\sqrt{1-|\mathrm{D}u|^2}}=0, & \text{in } \Omega,\\[1.2ex]
u=\varphi, & \text{on } \partial\Omega.
\end{cases}
\end{equation}
Because $(\log\phi)''\le 0$, $u_0$ is the unique solution to \eqref{general:Dirichlet:problem:B} in $C^\infty(\Omega)\cap C(\overline{\Omega})$.

It remains to show that for every $v\in\mathcal{F}_\varphi$,
\[
\mathcal{A}_{\phi,\Omega}(v)\le \mathcal{A}_{\phi,\Omega}(u_0),
\]
with equality if and only if $v=u_0$.\\
\indent Let $\tilde{\phi}$ be the smooth positive function $\phi^{\frac{n}{m}}$. It holds that 
\begin{equation}\label{remark:A}
\dfrac{\tilde{\phi}'}{\tilde{\phi}}n=\dfrac{\phi'}{\phi}m
\end{equation}
Consider the Lorentzian warped product $\Omega\times \mathbb{R}$ equipped with the metric $\tilde{\phi}^2(r)(\sigma-dr^2)$, written as $L_{\tilde{\phi}}$.

Fix $t\in\mathbb{R}$, and let $u_t$ denote the solution of the Dirichlet problem
\begin{equation}\label{S-SME-t}
\begin{cases}
\mathrm{div}\!\left(\dfrac{\mathrm{D}u}{\sqrt{1-|\mathrm{D}u|^2}}\right)
+(\log\phi)'(u(x))\dfrac{m}{\sqrt{1-|\mathrm{D}u|^2}}=0, & \text{in } \Omega,\\[1.2ex]
u=\varphi+t, & \text{on } \partial\Omega.
\end{cases}
\end{equation}
Since $(\log\phi)''\le 0$, the solution $u_t$ is unique and depends continuously on $t$. Let $\operatorname{gr}(u_t)$ be the graph of $u_t$.The family $\{\operatorname{gr}(u_t)\}_{t\in\mathbb{R}}$ forms a smooth foliation of $\overline{\Omega}\times\mathbb{R}$. Define a smooth timelike unit vector field $X$ by setting $X=\vec v_t$ on $\operatorname{gr}(u_t)$ for each $t\in\mathbb{R}$, where $\vec v_t$ is the timelike normal vector of $\operatorname{gr}(u_t)$ in $L_{\tilde{\phi}}$.  By Lemma \ref{lm:meancurvature} and \eqref{remark:A}, $gr(u_t)$ is maximal in $L_{\tilde{\phi}}$, 
\begin{equation} \operatorname{\tilde{div}}(\vec{v}_t)=\mathrm{H}_{\log\tilde{\phi}}=0
\end{equation}
where $\operatorname{\tilde{div}}$ is the divergence of $L_{\tilde{\phi}}$. 
Let $\xi$ denote the Lorentzian volume form of $L_{\tilde{\phi}}$. By the definition of divergence (see \cite[P423]{Lee13}), the $(n-1)$-form $X\lrcorner  \xi$ is closed and satisfies
\[
d(X\lrcorner \xi)=\operatorname{\tilde{div}}(X)\xi=\operatorname{\tilde{div}}(\vec{v}_t)\xi=0,
\]
.

Let $E$ be the domain enclosed by the graphs of $v$ and $u_0$, and let $N$ denote the future timelike normal of $\operatorname{gr}(v)$. Applying Stokes' theorem it  gives
\[
\begin{aligned}
0=\int_E d(X\lrcorner \xi)
&=\int_{\operatorname{gr}(v)} X\lrcorner \xi
-\int_{\operatorname{gr}(u_0)} X\lrcorner \xi\\
&=\int_{\operatorname{gr}(v)}\langle X,N\rangle\,d\mathcal{H}^n
-\int_{\operatorname{gr}(u_0)}\langle \vec v_0,\vec v_0\rangle\,d\mathcal{H}^n,
\end{aligned}
\]
where $\mathcal{H}^n$ denotes $n$-dimensional Hausdorff measure and $\vec v_0$ is the timelike normal of $\operatorname{gr}(u_0)$. Since $\langle \vec v_0,\vec v_0\rangle=-1$ and $\langle X,N\rangle\le -1$ on $\operatorname{gr}(v)$, we conclude that
\begin{equation}\label{AB}
\mathcal{A}_{\phi,\Omega}(u_0)\ge \mathcal{A}_{\phi,\Omega}(v),
\end{equation}
with equality if and only if $v=u_0$ in $\Omega$. The proof is completed.
\end{proof}

\subsection{Area maximizing problems when $\phi(r)=r$}

In this subsection, we prove the following result.

\begin{theorem}\label{main:thm:A2}
Let $u_0$ be the unique solution from Theorem \ref{main:thm:A1} to the Dirichlet problem
\[
\begin{cases}
\mathrm{div}\!\left(\dfrac{\mathrm{D}u}{\sqrt{1-|\mathrm{D}u|^2}}\right)
+\dfrac{1}{u(x)}\dfrac{m}{\sqrt{1-|\mathrm{D}u|^2}}=0, & \text{in } \Omega,\\[1.2ex]
u=0, & \text{on } \partial\Omega.
\end{cases}
\]
Then $u_0$ uniquely solves the maximizing problem
\[
\sup_{v\in\mathcal{F}}\mathcal{A}_{\phi,\Omega}(v)
\]
for $\phi(r)=r$, where
\[
\mathcal{F}=\{v\in \mathrm{Lip}(\overline{\Omega}): |\mathrm{D}v|<1,\ v|_{\partial\Omega}=0,\ v|_{\Omega}>0\}.
\]
\end{theorem}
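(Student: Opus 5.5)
The plan is to run the calibration argument of Theorem \ref{main:thm:BA} with $\phi(r)=r$ on $(0,\infty)$. Here $(\log\phi)''=-1/r^2\le 0$, so the comparison structure is available. The boundary value $0$ forces the foliation to degenerate at $\partial\Omega$, so I will work on approximating subdomains and pass to a limit.

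\textbf{The foliation.} For $t\ge 0$ let $u_t$ be the solution of the symmetric maximal surface equation on $\Omega$ with $u_t=t$ on $\partial\Omega$. For $t>0$ this is Lemma \ref{lm:step:one}, and for $t=0$ it is Theorem \ref{main:thm:A1}. Since $(-1/u)'\ge0$, the maximum principle gives $u_t$ strictly increasing in $t$. Indeed $u_{t}+ (s-t)$ is a strict supersolution, as in the uniqueness argument above, so the ordering is strict. Continuity in $t$ follows from uniqueness together with the interior gradient bound of Lemma \ref{det:LA}. As $t\to\infty$, $u_t\to\infty$ uniformly, since $u_t\ge t$. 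Hence the graphs $\operatorname{gr}(u_t)$, $t\ge 0$, foliate the region $\{(x,r): x\in\Omega,\ r\ge u_0(x)\}$.

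In the warped product $L_{\tilde\phi}$ with $\tilde\phi=r^{m/n}$, each leaf is maximal. So the unit timelike normal field $X$ of the foliation is divergence free, and $X\lrcorner\xi$ is closed. The weight becomes $\tilde\phi^n=r^m$, so the volume of a graph is exactly $\mathcal{A}_{\phi,\Omega}$.

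\textbf{Competitors lying above $u_0$.} First take $v\in\mathcal{F}$ with $v\ge u_0$. Apply Stokes on the region $E_\varepsilon$ between $\operatorname{gr}(v)$ and $\operatorname{gr}(u_0)$ over $\Omega_\varepsilon=\{u_0>\varepsilon\}$, or over a smooth exhaustion $\Omega_k\uparrow\Omega$. This gives
\[
\mathcal{A}(u_0;\Omega_\varepsilon)-\mathcal{A}(v;\Omega_\varepsilon)\ge -\Big|\int_{\text{lateral}} X\lrcorner\xi\Big|,
\]
using the reverse Cauchy--Schwarz inequality $\langle X,N\rangle\le -1$. The lateral boundary lies over $\partial\Omega_\varepsilon$ between heights $u_0$ and $v$. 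Its flux is bounded by
\[
C\int_{\partial\Omega_\varepsilon}\int_{u_0}^{v} r^m\,dr\,d\mathcal{H}^{n-1}\le C\,\mathcal{H}^{n-1}(\partial\Omega_\varepsilon)\,\sup_{\partial\Omega_\varepsilon}v^{m+1}.
\]
This tends to $0$ because $v$ is Lipschitz with $v=0$ on $\partial\Omega$, and because $\partial\Omega_\varepsilon$ lies within distance $O(\varepsilon)$ of $\partial\Omega$ (here $|\mathrm{D}u_0|\to1$ and $u_0$ is comparable to the distance function). Letting $\varepsilon\to0$ gives $\mathcal{A}(u_0)\ge\mathcal{A}(v)$.

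\textbf{General competitors.} For general $v$, write $\mathcal{A}(v)$ as the sum of its integrals over $\{v\ge u_0\}$ and $\{v<u_0\}$, and handle the part below $u_0$ separately. On $\{v<u_0\}$ the leaves $u_t$ with $t\ge 0$ do not reach. I would instead use the foliation by the functions $s\,u_0$ for $s\in(0,1]$ below $u_0$, or better, observe that $\max(v,u_0)\in\mathcal{F}$ in the Lipschitz sense.

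The key point is that $\mathcal{A}(\max(v,u_0))-\mathcal{A}(v)$ equals the difference between $\mathcal{A}(u_0)$ and $\mathcal{A}(v)$ over $\{v<u_0\}$. This difference must be shown to be $\ge0$. On $\{v<u_0\}$, with $v=u_0$ on its boundary, I would argue by comparing with $u_0$ restricted there. The idea is to use the family $\{u_0-\tau\}$ together with the supersolution property, or a family of rescaled solutions $\lambda u_0(\cdot)$ on dilated domains. This is the main obstacle, because the radial scaling from Step Two of Theorem \ref{thm:main:dke} gives a maximal foliation only in the flat case.

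My first attempt would be the following concavity argument. The functional $\int r^m\sqrt{1-|\mathrm{D}u|^2}$ is not concave in $u$. However, on $\{v<u_0\}$ one can use the vector field $X$ extended below $u_0$ by the normals of $u_0-\tau$. By the uniqueness computation, the leaves $u_0-\tau$ satisfy $\operatorname{div}(\vec v)\ge 0$. This gives the inequality with the correct sign for the region lying below the calibrated surface.

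\textbf{Equality.} Equality forces $\langle X,N\rangle=-1$ almost everywhere, so $\mathrm{D}v=\mathrm{D}u_0$ almost everywhere. Since both vanish on $\partial\Omega$, this gives $v=u_0$.
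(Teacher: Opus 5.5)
You have the right idea, a calibration by a foliation, but there is a genuine gap exactly where the problem degenerates, at $\partial\Omega\times\{0\}$.

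\textbf{The lateral flux.} In your lateral-flux bound you treat the horizontal part of $X$ as bounded by a constant. On $\partial\Omega_\varepsilon\times[u_0,v]$ the flux density is actually $r^m\langle \mathrm{D}u_t,\nu\rangle/\sqrt{1-|\mathrm{D}u_t|^2}$. Since $|\mathrm{D}u_0|=1$ on $\partial\Omega$ (Theorem~\ref{main:thm:A1}), the factor $(1-|\mathrm{D}u_t|^2)^{-1/2}$ is unbounded on this strip as $\varepsilon\to0$; for the radial solutions of Section 3 it grows like $d^{-\alpha}$, where $d$ is the distance to the boundary. So no uniform $C$ exists. To show the flux still vanishes you would need a blow-up rate holding uniformly for every leaf $u_t$, $0\le t\lesssim\varepsilon$, that crosses the strip, and you have no such estimate.

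\textbf{The case $v<u_0$ and equality.} The case $v<u_0$ is only sketched. You never glue the translate field to the upper one; for Lipschitz $v$ the set $\{v<u_0\}$ has nonsmooth boundary, so ``handling it separately'' is not automatic. You also do not estimate its boundary flux, which has the same degeneracy. The equality step is wrong as stated: on $\{v>u_0\}$ your $X$ is the normal of $u_t$, so $\langle X,N\rangle=-1$ gives $\mathrm{D}v(x)=\mathrm{D}u_{t(x,v(x))}(x)$, not $\mathrm{D}v=\mathrm{D}u_0$.

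\textbf{A repair.} Your last idea, pushed further, fixes all three points and removes the need for the $u_t$ foliation, whose $C^1$ dependence on $t$ near $t=0$ you also did not establish. Let $X=\tilde\phi(r)^{-1}(\mathrm{D}u_0+\partial_r)/\sqrt{1-|\mathrm{D}u_0|^2}$ on $\Omega\times(0,\infty)$. This is the unit normal field of all vertical translates $\{r=u_0+\tau\}$, with $\tau$ of either sign. Since $X$ is unit, its divergence equals the mean curvature of the leaf, so by \eqref{eq:divergence} and \eqref{remark:A}
\[
\widetilde{\operatorname{div}}X=\frac{m}{\tilde\phi(r)\sqrt{1-|\mathrm{D}u_0(x)|^2}}\Bigl(\frac1r-\frac1{u_0(x)}\Bigr).
\]
This is positive below $\operatorname{gr}(u_0)$ and negative above it, which is exactly the sign the Stokes argument needs in each case. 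Every leaf has gradient $\mathrm{D}u_0(x)$ at $x$. Near $\partial\Omega$ you also have $v\le d\le Cu_0$, the last inequality from the barrier of Lemma~\ref{lm:co:bound}. Hence the lateral flux over $\{u_0=\varepsilon\}$ is at most $C\varepsilon^{m+1}\int_{\{u_0=\varepsilon\}}|\mathrm{D}u_0|(1-|\mathrm{D}u_0|^2)^{-1/2}$. Integrating $\operatorname{div}\bigl(u_0^m\mathrm{D}u_0/\sqrt{1-|\mathrm{D}u_0|^2}\bigr)=-m\,u_0^{m-1}\sqrt{1-|\mathrm{D}u_0|^2}$ over $\{u_0>\varepsilon\}$ bounds this by $C\varepsilon\, m\int_\Omega u_0^{m-1}=O(\varepsilon)$. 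The strict sign of the divergence off $\operatorname{gr}(u_0)$ then forces $v=u_0$ in the equality case.

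\textbf{Comparison with the paper.} The paper never touches the degenerate boundary. It compares $u_{c_i}$ with $v+c_i$, replacing $r$ by a log-concave $\phi_i$ that equals $r$ on the range of both. It then applies Theorem~\ref{main:thm:B}, whose foliation by solutions with boundary values $c_i+t$, $t\in\mathbb{R}$, is nondegenerate up to $\partial\Omega$ and also covers the region below $u_{c_i}$, and finally lets $i\to\infty$. That limit loses strictness, so the paper obtains uniqueness only by asserting, without proof, that an equal-area competitor is a smooth solution. The translate calibration gives uniqueness directly.
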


\begin{rem}
In this setting,
\[
\mathcal{A}_{r,\Omega}(v)=\int_{\Omega}v^m\sqrt{1-|\mathrm{D}v|^2}\,\mathrm{dvol}.
\]
By Theorem \ref{main:thm:A1}, $u_0\in\mathcal{F}$. The conclusion states that
\[
\int_{\Omega}u_0^m\sqrt{1-|\mathrm{D}u_0|^2}\,\mathrm{dvol}
=
\sup_{v\in\mathcal{F}}\int_{\Omega}v^m\sqrt{1-|\mathrm{D}v|^2}\,\mathrm{dvol}.
\]
Together with Theorem \ref{main:thm:A1}, this yields Theorem \ref{main:thm:A}.
\end{rem}

\begin{proof}
From the proof of Theorem \ref{main:thm:A1}, $u_0$ is the $C^0$-limit of the monotone sequence $\{u_{c_i}\}_{i=1}^\infty$, where $\{c_i\}_{i=1}^\infty$ is a positive sequence tending to $0$, and each $u_{c_i}$ satisfies
\[
\begin{cases}
\mathrm{div}\!\left(\dfrac{\mathrm{D}u}{\sqrt{1-|\mathrm{D}u|^2}}\right)
+\dfrac{1}{u(x)}\dfrac{m}{\sqrt{1-|\mathrm{D}u|^2}}=0, & \text{in } \Omega,\\[1.2ex]
u=c_i, & \text{on } \partial\Omega.
\end{cases}
\]

For any $v\in\mathcal{F}$, set $v_{c_i}=v+c_i$. Since $v$ is continuous on $\overline{\Omega}$, for each fixed $c_i>0$ there exists a positive constant $\beta_i$ (depending on $c_i$ and $v$) such that $v_{c_i}\ge \beta_i$. Define a new positive function $\phi_i$ on $\mathbb{R}$ by
\[
\phi_i(r)=
\begin{cases}
r, & r>\dfrac{\beta_i}{2},\\[1.2ex]
\dfrac{\beta_i}{4}, & r<\dfrac{\beta_i}{4}.
\end{cases}
\]
After mollification if necessary, we may assume $\phi_i$ is smooth and satisfies $(\log\phi_i)''\le 0$ on $\mathbb{R}$. Consequently, $u_{c_i}$ also solves
\[
\begin{cases}
\mathrm{div}\!\left(\dfrac{\mathrm{D}u}{\sqrt{1-|\mathrm{D}u|^2}}\right)
+\dfrac{\phi_i'(u)}{\phi_i(u)}\dfrac{m}{\sqrt{1-|\mathrm{D}u|^2}}=0, & \text{in } \Omega,\\[1.2ex]
u=c_i, & \text{on } \partial\Omega.
\end{cases}
\]

By the definition of $\phi_i$, we have
\[
\mathcal{A}_{r,\Omega}(u_{c_i})=\mathcal{A}_{\phi_i,\Omega}(u_{c_i}),\qquad
\mathcal{A}_{r,\Omega}(v_{c_i})=\mathcal{A}_{\phi_i,\Omega}(v_{c_i}).
\]
Since $u_{c_i}=v_{c_i}$ on $\partial\Omega$, Theorem \ref{main:thm:B} gives
\[
\mathcal{A}_{\phi_i,\Omega}(u_{c_i})\ge \mathcal{A}_{\phi_i,\Omega}(v_{c_i})
\]
for every $i$. Hence $\mathcal{A}_{r,\Omega}(u_{c_i})\ge \mathcal{A}_{r,\Omega}(v_{c_i})$. Letting $i\to\infty$ yields
\[
\lim_{i\to\infty}\mathcal{A}_{r,\Omega}(u_{c_i})=\mathcal{A}_{r,\Omega}(u_0),
\qquad
\lim_{i\to\infty}\mathcal{A}_{r,\Omega}(v_{c_i})=\mathcal{A}_{r,\Omega}(v).
\]
Therefore,
\[
\mathcal{A}_{r,\Omega}(u_0)\ge \mathcal{A}_{r,\Omega}(v).
\]
for every $v\in\mathcal{F}$. For uniqueness, suppose $u'\in\mathcal{F}$ satisfies $\mathcal{A}_{r,\Omega}(u')=\mathcal{A}_{r,\Omega}(u_0)$. Then $u'\in C^\infty(\Omega)\cap C(\overline{\Omega})$ and solves \eqref{infinity:Dirichlet:problem}. By Theorem \ref{main:thm:A}, $u'=u_0$. It completes the proof.
\end{proof}

\vspace{5mm}
\bibliographystyle{alpha} %数学常用样式，可换plain,alpha
\bibliography{Refs.bib}
\end{document}